  \theoremstyle{plain}
  \newtheorem{Theorem}{Theorem}[section]
  \newtheorem{Lemma}{Lemma}[section]
  \newtheorem{Definition}{Definition}[section]
  \theoremstyle{remark}
  \newtheorem{remark}{Remark}
  \numberwithin{equation}{section}
  \numberwithin{figure}{section}
  \numberwithin{remark}{section}
\begin{document}

\title{On the solutions to weakly coupled system of $\boldsymbol{k_i}$-Hessian equations}

\author{JingWen Ji}
\address{College of Mathematics and Statistics, Nanjing University of Information Science and Technology, Nanjing 210094, P.R. China}
\email{20201215013@nuist.edu.cn}

\author{Feida Jiang*}
\address{School of Mathematics and Shing-Tung Yau Center of Southeast University, Southeast University, Nanjing 211189, P.R. China}
\email{jfd2001@163.com}

\author{BaoHua Dong}
\address{College of Mathematics and Statistics, Nanjing University of Information Science and Technology, Nanjing 210094, P.R. China}
\email{baohuadong@nuist.edu.cn}

\thanks{This work was supported by the National Natural Science Foundation of China (No. 11771214, No. 11901303).}

\subjclass[2010]{35J70, 35J60, 35J96, 35J57, 34A34}

\date{\today}
\thanks{*corresponding author}

\keywords{$k_i$-Hessian equations; existence; multiplicity; uniqueness; nonexistence; eigenvalue problem.}

\begin{abstract}
In this paper, the existence and multiplicity of nontrivial radial convex solutions to general coupled system of $k_i$-Hessian equations in a unit ball are studied via a fixed-point theorem. In particular, we obtain the uniqueness of nontrivial radial convex solution and nonexistence of nontrivial radial $\boldsymbol{k}$-admissible solution to a power-type system coupled by $k_i$-Hessian equations in a unit ball. Moreover, using a generalized Krein-Rutman theorem, the existence of  $\boldsymbol{k}$-admissible solutions to an eigenvalue problem in a general strictly $(k-1)$-convex domain is also obtained.
	
\end{abstract}

\maketitle
\section{Introduction}\label{Section 1}

In this paper, we consider the existence and multiplicity of nontrivial radial $\boldsymbol{k}$-admissible solutions to the coupled system of the following $k_i$-Hessian equations:
\begin{equation}\label{EQUATION1}
\left\{
	\begin{aligned}
		&S_{k_{1}}\left( D^{2}u_{1}\right) =f_{1}\left( |x|,-u_{2}\right) ,& {\rm in}\ B,\\
		&S_{k_{2}}\left( D^{2}u_{2}\right) =f_{2}\left( |x|,-u_{3}\right) ,& {\rm in}\ B,\\
		&\qquad \qquad \quad \vdots\\
		&S_{k_{n-1}}\left( D^{2}u_{n-1}\right) =f_{n-1}\left( |x|,-u_{n}\right) ,& {\rm in}\ B,\\
		&S_{k_{n}}\left( D^{2}u_{n}\right) =f_{n}\left( |x|,-u_{1}\right) ,& {\rm in}\ B,\\
		&u_{i}=0,i=1,\ldots,n,& {\rm on}\ \partial B,
	\end{aligned}
	\right.
\end{equation}
where $\boldsymbol{k}=(k_1,\ldots,k_n)$, $k_{i}=1,\ldots,N\ (i=1,\ldots,n)$,  $B=\left\lbrace x \in\mathbb{R}^{N}:|x|<1\right\rbrace$ is a unit ball, $n\ge 2$ and $N\ge2$ are integers. The nonlinearities $f_{i}\ (i=1,\ldots,n)$ satisfy 
\begin{equation*}
	({\rm F}): f_{i}\in C\left(  [0,1] \times[ 0,+\infty) ,[ 0,+\infty )\right) , \quad i=1,\ldots,n
\end{equation*}
and each $f_i$ is not identical to zero.

The $k$-Hessian operator $S_{k}$ is defined by the $k$-th elementary symmetric function of eigenvalues of $D^2u$, i.e.
\begin{equation*}
	S_{k}\left(D^{2}u\right) := S_{k}\left(\lambda(D^{2}u)\right) =\sum_{1\le i_{1}<\cdots< i_{k}\le N}\lambda_{i_{1}}\cdots\lambda_{i_{k}} , \quad k=1,\ldots,N,
\end{equation*}
where $\lambda(D^{2}u)=(\lambda_1,\ldots,\lambda_N)$ is the vector of eigenvalues of  $ D^{2}u=\left[ \frac{\partial^{2}u}{\partial x_{i}\partial x_{j}}\right] _{n\times n}$, (see \cite{TW-I,Wang-I} for instance).
Notice that when $k=1$, the Hessian operator reduces to the classical Laplace operator $S_{1}(D^{2}u)=\sum_{i=1}^{N}\lambda_{i}=\Delta u$.
When $k=N$, the Hessian operator is the Monge-Amp\`{e}re operator  $S_{N}(D^{2}u)=\prod_{i=1}^{N}\lambda_{i}=\det(D^{2}u)$.
In fact, the $k$-Hessian operator can be regarded as an extension of the Laplace operator and the Monge-Amp\`{e}re operator.
When $k\ge 2$, the $k$-Hessian operator is a fully nonlinear operator. 

Let $u\in C^{2}(\Omega)$ and $\sigma_{k}=\left\lbrace \lambda\in\mathbb{R}^{N}:S_{l}(\lambda)>0,\forall l=1,\ldots,k\right\rbrace $ be a convex cone and its vertex be the origin. 
If $\lambda(D^{2}u)\in\overline{\sigma}_{k}\ (\sigma_{k})$, $u$ is said to be $k$-convex (uniformly $k$-convex) in $\Omega$. 
Equivalently,  if $\lambda(-D^{2}u) \in \overline{\sigma}_{k}\ (\sigma_{k})$, $u$ is $k$-concave (uniformly $k$-concave) in $\Omega$. 
We say $u\in C^{2}(\Omega)\cup C^{0}(\overline{\Omega})$ is $k$-admissible if $\lambda(D^{2}u)\in\overline{\sigma}_{k}$. In particular, an $N$-admissible function $u$ satisfying $\lambda(D^{2}u)\in\overline{\sigma}_{N}$ is said to be convex. 
It is clear that $\sigma_{N}\subset\cdots\subset\sigma_{k}\subset\cdots\subset\sigma_{1}$, which implies that convex functions are contained in $k$-admissible functions.
Actually, we know from \cite{CNS} that for a $k$-Hessian equation, it is elliptic when restricted to $k$-admissible functions.
For $\boldsymbol{k}=(k_{1},\ldots,k_{n})$, $\boldsymbol{u}=(u_{1},\ldots,u_{n})$, if $u_{i}$ is $k_{i}$-admissible and satisfies \eqref{EQUATION1} for all $i=1,\ldots ,n$, we say $\boldsymbol{u}$ is a $\boldsymbol{k}$-admissible solution of \eqref{EQUATION1}. 

Recalling that $f_{i}\in [0,+\infty)\ (i=1,\ldots,n)$ and $\boldsymbol{u}\in C^{2}(B)$ is $\boldsymbol{k}$-admissible solution of \eqref{EQUATION1} vanishing on the boundary, we can achieve that $\boldsymbol{u}$ is sub-harmonic in $B$ from \cite{Wang-I}.
Hence, we apply the maximum principle to conclude that $\boldsymbol{u}$ is negative in $B$.

The study of $k$-Hessian equations plays an important role in differential geometry, fluid mechanics and other applied disciplines. 
In the past years, many authors show great interest in solutions of $k$-Hessian equations and many excellent results on $k$-Hessian equations have been obtained, for instance, see \cite{BLL,Guan,Jian,JTX,JT,MQ,Tru,Wang-I,CW,CNS,TW-I,TW-II,T}. 
However, there are few studies that consider the fully nonlinear coupled systems except
\cite{FZ-I,WHY,ZZ,WYZB,CJX,FMQ-I,FMQ} based on our cognition.
For example, by using fixed point theorem, Wang \cite{WHY} established the existence, multiplicity and nonexistence of convex radial solutions to a coupled system of Monge-Amp\`{e}re equations in superlinear and sublinear cases.
In \cite{FZ-I}, the authors studied the existence and multiplicity of nontrivial radial solutions for system coupled by multiparameter $k$-Hessian equations and obtained sufficient conditions for the existence of nontrivial radial solutions to power-type coupled $k$-Hessian system based on a eigenvalue theory in cones.  
In particular, Cui considered a Hessian type system coupled by different $k$-Hessian equations and obtained the existence of entire $k$-convex radial solutions, see \cite{CJX}. 

Inspired by the above works, we are interested in a system coupled by  different $k$-Hessian equations with general nonlinearities which satisfy $\alpha_{i}$ or $\beta_{i}$-asymptotic growth conditions. 
In this paper, we shall establish the existence and multiplicity of nontrivial radial $\boldsymbol{k}$-admissible solutions of the weakly coupled degenerated system \eqref{EQUATION1}. 
It is worth to notice that the system \eqref{EQUATION1} contains a variety of different $k$-Hessian equations which is significantly different from that in \cite{WYZB,FZ-I,ZZ,FMQ-I} such that the problem we considered
can contain Laplace equations and Monge-Amp\`{e}re equations at the same time. 
This kind of system can represent the coupling of different types of elliptic equations, which makes our problem more comprehensive and more applicable.

If $\alpha_{i}, \beta_{i}>0$, we let 
$$\underline{f}_{i}^{0}=\liminf_{c \to 0^{+}} \min_{0\le t\le 1}\frac{f_{i}(t,c)}{c^{\alpha _{i}}}, \qquad \qquad\underline{f}_{i}^{\infty}=\liminf_{c \to \infty} \min_{0\le t\le 1}\frac{f_{i}(t,c)}{c^{\beta _{i}}},$$

$$\overline{f}_{i}^{0}=\limsup_{c \to 0^{+}} \max_{0\le t\le 1}\frac{f_{i}(t,c)}{c^{\alpha _{i}}}, \qquad \qquad\overline{f}_{i}^{\infty}=\limsup_{c \to \infty} \max_{0\le t\le 1}\frac{f_{i}(t,c)}{c^{\beta_{i}}}.$$
Here, we call them $\alpha_{i}$ or $\beta_{i}$-asymptotic growth condition, super-$\alpha_{i}$ or $\beta_{i}$-asymptotic growth condition and sub-$\alpha_{i}$ or $\beta_{i}$-asymptotic growth condition.
Compared with some $N$-asymptotic growth  (see, for instance \cite{GS,WHY,FMQ} where the constants $\alpha_{i}=\beta_{i}=N$) in studying Monge-Amp\`{e}re equations and some $k$-asymptotic growth (see, for instance \cite{FZ-I,ZXM-I,ZXM-II} where the constants $\alpha_{i}=\beta_{i}=k$) in studying $k$-Hessian equations, our conditions are more flexible.
By imposing suitable conditions on $\underline{f}_{i}^{0}$, $\underline{f}_{i}^{\infty}$, $\overline{f}_{i}^{0}$, $\overline{f}_{i}^{\infty}$ and coordinating inequality relations between $\alpha_{i},\beta_{i}$ and $k_{i}$, we obtain existence and multiplicity results in general cases as follows.

We will assume $\boldsymbol{f}=\{f_1,\ldots,f_n\}$ satisfies one of the following conditions:\\
(C1) $\underline{f}_{i}^{0}, \overline{f}_{i}^{\infty}\in (0,+\infty)$, $i=1,\ldots,n$ and $f_{i}(t,0)=0$, $i=2,\ldots,n$;\\
(C2) $\overline{f}_{i}^{0}, \underline{f}_{i}^{\infty}\in (0,+\infty)$, $i=1,\ldots,n$;\\
(C3) $\underline{f}_{i}^{0}, \underline{f}_{i}^{\infty}\in (0,+\infty)$, $i=1,\ldots,n$ and $f_{i}(t,0)=0$, $i=2,\ldots,n$;\\
(C4) $\overline{f}_{i}^{0}, \overline{f}_{i}^{\infty}\in (0,+\infty)$, $i=1,\ldots,n$.
\begin{Theorem}\label{Th1.1}(Existence theorem)
Suppose that {\rm (F)}  and one of the following conditions hold:\\
(a). {\rm (C1)} holds and positive constants $\alpha_{i}$, $\beta_{i}\ (i=1,\ldots,n)$ satisfy $$\prod_{i=1}^{n}\alpha_{i}<\prod_{i=1}^{n}k_{i},\quad\prod_{i=1}^{n}\beta_{i}<\prod_{i=1}^{n}k_{i};$$\\
(b). {\rm (C2)} holds and positive constants $\alpha_{i}$, $\beta_{i}\ (i=1,\ldots,n)$ satisfy $$\prod_{i=1}^{n}\alpha_{i}>\prod_{i=1}^{n}k_{i},\quad\prod_{i=1}^{n}\beta_{i}>\prod_{i=1}^{n}k_{i}.$$
Then system \eqref{EQUATION1} has at least one nontrivial radial convex solution.
\end{Theorem}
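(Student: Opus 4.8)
The plan is to convert the PDE system into an equivalent system of integral equations for radial functions and then apply a cone fixed-point theorem of Krasnoselskii type. First I would pass to radial coordinates: writing $u_i=u_i(r)$ with $r=|x|$, the operator $S_{k_i}(D^2u_i)$ acting on a radial function has the well-known form $c_{N,k_i}\, r^{1-N}\bigl(r^{N-k_i}(u_i')^{k_i}\bigr)'$ up to a positive dimensional constant, so each equation can be integrated twice against the boundary condition $u_i(1)=0$ and the regularity condition $u_i'(0)=0$. Since each $f_i\ge 0$, this shows $u_i'\ge 0$, hence $u_i\le 0$ on $B$; setting $v_i=-u_i\ge 0$ one obtains a representation $v_i(r)=\bigl(T_i v_{i+1}\bigr)(r)$ (indices mod $n$, so $v_{n+1}=v_1$), where $T_i$ is an explicit, completely continuous, monotone integral operator built from an inner $k_i$-th-root and an outer integration. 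Composing, $v_1$ is a fixed point of the fully nonlinear operator $T:=T_1\circ T_2\circ\cdots\circ T_n$ acting on the cone $P=\{v\in C[0,1]:v\ge 0,\ v \text{ nonincreasing and concave-type}\}$; a fixed point of $T$ in $P\setminus\{0\}$ recovers a nontrivial radial solution of \eqref{EQUATION1}, and the sign of $v_i''$ combined with $k_i\le N$ forces $\lambda(D^2u_i)$ into $\overline{\sigma}_N$, i.e. the solution is convex (not merely $\boldsymbol{k}$-admissible).

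The heart of the argument is then the Krasnoselskii cone-compression/expansion theorem: if $T:P\to P$ is completely continuous and there exist radii $0<\rho<R$ with $\|Tv\|\ge\|v\|$ on $P\cap\partial B_\rho$ and $\|Tv\|\le\|v\|$ on $P\cap\partial B_R$ (or the reverse), then $T$ has a fixed point in $P$ with $\rho\le\|v\|\le R$. For case (a), condition (C1) supplies $\underline{f}_i^0\in(0,\infty)$, so near $0$ each $f_i$ is bounded below by a multiple of $c^{\alpha_i}$; tracking this through the composition $T=T_1\circ\cdots\circ T_n$, the net homogeneity exponent that controls small-norm behaviour is $\bigl(\prod\alpha_i\bigr)/\bigl(\prod k_i\bigr)$, which is $<1$ by hypothesis — this produces the \emph{expansion} inequality $\|Tv\|\ge\|v\|$ on a small sphere $\partial B_\rho$ (a sub-$1$ power beats the identity near the origin). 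Dually, $\overline{f}_i^\infty\in(0,\infty)$ gives an upper bound $f_i(t,c)\le$ const$\cdot c^{\beta_i}$ for large $c$, and since $\bigl(\prod\beta_i\bigr)/\bigl(\prod k_i\bigr)<1$ this yields the \emph{compression} inequality $\|Tv\|\le\|v\|$ on a large sphere $\partial B_R$; the extra additive constants coming from the $\limsup$ (rather than a clean power) are absorbed by choosing $R$ large. Case (b) is the mirror image: (C2) with both products of exponents exceeding $\prod k_i$ reverses both inequalities — superlinear-type behaviour at $0$ forces compression on a small sphere and superlinear-type behaviour at $\infty$ forces expansion on a large sphere — so Krasnoselskii again applies. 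The hypotheses $f_i(t,0)=0$ for $i\ge 2$ in (C1) are used to guarantee that the iterated operator does not pick up a spurious positive constant that would wreck the lower bound at small norm, and the fact that each $f_i\not\equiv 0$ keeps the fixed point off $\{0\}$.

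The main obstacle I anticipate is bookkeeping the composition $T_1\circ\cdots\circ T_n$ rather than a single operator: one must verify that $T$ maps $P$ into $P$ (each $T_i$ must preserve positivity, monotonicity, and the concavity-type structure that makes the cone estimates sharp — the $k_i$-th root inside $T_i$ is the delicate point, since one needs $\bigl(\int_0^r s^{N-1}f_i\,ds\bigr)^{1/k_i}$ to inherit the right qualitative behaviour), and that the homogeneity exponents multiply correctly under composition, producing exactly the product conditions $\prod\alpha_i\lessgtr\prod k_i$ and $\prod\beta_i\lessgtr\prod k_i$. A secondary technical point is handling the gap between the clean power-law heuristic and the actual $\liminf/\limsup$ definitions of $\underline{f}_i^0,\overline{f}_i^\infty$, etc.: one fixes $\varepsilon>0$, extracts genuine two-sided power bounds valid only on a neighbourhood of $0$ or $\infty$, and then chooses $\rho$ small enough (resp. $R$ large enough) so that along the relevant sphere every intermediate iterate $v_i$ stays in the regime where the bound is valid — this requires an a priori estimate controlling $\|T_{i+1}\circ\cdots\circ T_n v\|$ from above and below in terms of $\|v\|$, which the monotonicity of the $T_i$'s on the cone provides.
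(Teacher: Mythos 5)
Your proposal is correct and follows essentially the same route as the paper: radial reduction, the sign change $v_i=-u_i$, the composite integral operator $T=T_1\circ\cdots\circ T_n$ on a cone with a Harnack-type lower bound, and the Guo--Krasnoselskii compression/expansion theorem, with the product conditions $\prod\alpha_i\lessgtr\prod k_i$, $\prod\beta_i\lessgtr\prod k_i$ entering exactly as the net homogeneity exponents you describe. You also correctly identified the two delicate points the paper handles explicitly --- using $f_i(t,0)=0$ ($i\ge 2$) to keep the intermediate iterates in the small-argument regime in case (a), and using the cone's $\min_{[1/4,3/4]}v\ge\frac{1}{4}\|v\|$ property to keep them in the large-argument regime in case (b).
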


Theorem \ref{Th1.1} is concerning the existence of nontrivial radial convex solutions to the weakly coupled degenerate system  \eqref{EQUATION1} with general nonlinear terms. Furthermore, we can consider the result of multiplicity as well.

Let
\begin{equation*}
	\begin{aligned}
		&G_{i}=\max\left\lbrace f_{i}(t,v_{i+1}(t)):(t,v_{i+1}(t))\in [0,1]\times[0,G_{i+1}^\frac{1}{k_{i+1}}] \right\rbrace, \quad i=1,\ldots,n-1,\\
		&G_{n}=\max\left\lbrace f_{n}(t,v_{1}(t)):(t,v_{1}(t))\in [0,1]\times[0,\frac{r_{0}}{4}] \right\rbrace,\\
		&\tilde{G}_{i}=\max\left\lbrace  f_{i}(t,v_{i+1}(t)):
		(t,v_{i+1}(t))\in[0,1]\times[0,\tilde{G}_{i+1}^\frac{1}{k_{i+1}}]
		\right\rbrace, \quad i=2,\ldots,n-1, \\
		&\tilde{G}_{n}=\max\left\lbrace  f_{n}(t,v_{1}(t)):
		(t,v_{1}(t))\in[0,1]\times[0,R_{0}]
		\right\rbrace,\\
		&E_{i}=\min\left\lbrace 
		f_{i}(t,v_{i+1}(t)):(t,v_{i+1}(t))\in [\frac{1}{4},\frac{3}{4}]\times[\frac{1}{4}\Gamma_{i+1}E_{i+1}^\frac{1}{k_{i+1}},\tilde{G}_{i+1}^\frac{1}{k_{i+1}}] 
		\right\rbrace, \quad i=1,\ldots,n-1,\\
		&E_{n}=\min\left\lbrace 
		f_{n}(t,v_{1}(t)):(t,v_{1}(t))\in [\frac{1}{4},\frac{3}{4}]\times[\frac{1}{4}R_{0},R_{0}] 
		\right\rbrace.
	\end{aligned}
\end{equation*}

\begin{Theorem}\label{Th1.2}(Multiplicity theorem)
    Suppose that {\rm (F)}  and one of the following conditions hold: \\
	(c). {\rm (C3)} holds, positive constants $\alpha_{i}$, $\beta_{i}\ (i=1,\ldots,n)$ satisfy $$\prod_{i=1}^{n}\alpha_{i}<\prod_{i=1}^{n}k_{i},\quad\prod_{i=1}^{n}\beta_{i}>\prod_{i=1}^{n}k_{i},$$
	and there exists a positive constant $r_{0}$ such that $r_{0}>G_{1}^\frac{1}{k_{1}}$;\\
    (d). {\rm (C4)} holds, positive constants $\alpha_{i}$, $\beta_{i}(i=1,\ldots,n)$ satisfy $$\prod_{i=1}^{n}\alpha_{i}>\prod_{i=1}^{n}k_{i},\quad\prod_{i=1}^{n}\beta_{i}<\prod_{i=1}^{n}k_{i},$$
    and there exists a positive constant $R_{0}$ such that $R_{0}<\Gamma_{1}E_{1}^\frac{1}{k_{1}}.$
Then system \eqref{EQUATION1} has at least two nontrivial radial convex solutions.
\end{Theorem}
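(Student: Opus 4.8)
The plan is to recast \eqref{EQUATION1} as a fixed-point equation for a completely continuous operator on a cone and then to run a cone compression--expansion argument twice. Writing each $u_i=u_i(r)$ with $r=|x|$ and using the radial identity $S_{k_i}(D^2u_i)=C_{N,k_i}\,r^{1-N}\bigl(r^{N-k_i}(u_i')^{k_i}\bigr)'$ together with $u_i(1)=0$ and $u_i'(0)=0$, the $i$-th equation of \eqref{EQUATION1} integrates twice to
$$
v_i(r)=\int_r^1\Bigl(C_{N,k_i}^{-1}\,t^{k_i-N}\!\int_0^t s^{N-1}f_i\bigl(s,v_{i+1}(s)\bigr)\,ds\Bigr)^{1/k_i}dt=:(A_iv_{i+1})(r),
$$
where $v_i:=-u_i\ge 0$ and indices are cyclic ($v_{n+1}:=v_1$). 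Hence radial $\boldsymbol k$-admissible solutions of \eqref{EQUATION1} correspond to fixed points of $T:=A_1\circ A_2\circ\cdots\circ A_n$, the remaining components being recovered by $v_i=A_iv_{i+1}$. I would then check that $T$ is completely continuous and leaves invariant a suitable cone $P\subset C[0,1]$ built from nonnegativity, the condition $v(1)=0$, monotonicity and a Harnack-type inequality $\min_{[1/4,3/4]}v\ge\frac14\|v\|$ (or its appropriate variant), all coming from the structure of the radial $k_i$-Hessian operators; this step is also where one reads off the asserted convexity of the $u_i$.

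The analytic core is two elementary bounds for the building blocks, valid for $w\in P$: an upper bound $\|A_iw\|\le C(\max_{0\le t\le 1}f_i(t,w(t)))^{1/k_i}$ with $C\le 1$, and a lower bound $(A_iw)(r)\ge\Gamma_i(\min_{1/4\le t\le 3/4}f_i(t,w(t)))^{1/k_i}$ for $r\in[1/4,3/4]$. The numbers $G_i$, $\tilde G_i$, $E_i$, $\Gamma_i$ in the statement are precisely what one gets by iterating these two estimates around the cycle: $G_1^{1/k_1}$ (respectively $\Gamma_1E_1^{1/k_1}$) is the bound on $\|Tv\|$ produced by starting from data of the size used to define $G_n$ (respectively $E_n$) and propagating the upper (respectively lower) estimate once around, so that $\|Tv\|\le G_1^{1/k_1}$ when all intermediate components stay in the nested boxes $[0,G_{i+1}^{1/k_{i+1}}]$, and $\|Tv\|\ge\Gamma_1E_1^{1/k_1}$ when they stay in $[\frac14\Gamma_{i+1}E_{i+1}^{1/k_{i+1}},\tilde G_{i+1}^{1/k_{i+1}}]$.

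Multiplicity then follows from three concentric spheres $0<\rho_1<\rho_2<\rho_3$ in $P$ and Guo--Krasnoselskii applied twice. In case (c): from $\underline f_i^0\in(0,\infty)$ one has $f_i(s,c)\ge c_ic^{\alpha_i}$ for small $c$, and chaining the lower estimates with the Harnack inequality gives $\|Tv\|\ge M\|v\|^{(\prod\alpha_i)/(\prod k_i)}$, hence $\|Tv\|\ge\|v\|$ on a small sphere $\|v\|=\rho_1$ since $\prod\alpha_i<\prod k_i$; symmetrically, $\underline f_i^\infty\in(0,\infty)$ with $\prod\beta_i>\prod k_i$ yields $\|Tv\|\ge\|v\|$ on a large sphere $\|v\|=\rho_3$; and the hypothesis $r_0>G_1^{1/k_1}$ is exactly what makes an intermediate sphere $\|v\|=\rho_2$ (with $\rho_2$ governed by $r_0$) a compression sphere, $\|Tv\|\le\|v\|$, because there every intermediate component remains inside the box defining $G_i$. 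Guo--Krasnoselskii then yields a fixed point in each of the two annuli $\rho_1\le\|v\|\le\rho_2$ and $\rho_2\le\|v\|\le\rho_3$; both are nontrivial ($\rho_1>0$) and distinct (strict inequality at $\rho_2$), and the requirement $f_i(t,0)=0$ for $i\ge 2$ is what forces the intermediate arguments to be small near the inner sphere. Case (d) is the mirror image: $\overline f_i^0,\overline f_i^\infty\in(0,\infty)$ with $\prod\alpha_i>\prod k_i$ and $\prod\beta_i<\prod k_i$ give $\|Tv\|\le\|v\|$ on the small and large spheres, while $R_0<\Gamma_1E_1^{1/k_1}$ makes the middle sphere an expansion sphere via the lower estimates and the $E_i$.

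The step I expect to be the real obstacle is this last one: turning the multiplicative relations between $\prod\alpha_i$, $\prod\beta_i$ and $\prod k_i$ into honest sphere inequalities for the composite $T$. The growth hypotheses control each $f_i$ only from one side and only near $0$ or near $\infty$, so one must push these one-sided bounds through $n$ successive integrations---each contributing a factor with exponent $\alpha_i/k_i$ or $\beta_i/k_i$ and an explicit constant---while simultaneously using the Harnack inequality to keep the argument fed into the next $f_i$ in the range where the one-sided bound applies. Closing this loop consistently, with constants and the nested boxes matching up around the cycle, is exactly the role played by the recursive definitions of $G_i$, $\tilde G_i$ and $E_i$; once it is in place, the remaining items (complete continuity, cone invariance, nontriviality, distinctness of the two solutions) are routine.
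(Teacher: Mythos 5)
Your proposal is correct and follows essentially the same route as the paper: reduce to the integral operators $T_i$ on the cone with the Harnack-type inequality $\min_{[1/4,3/4]}v\ge\frac14\|v\|$, establish the upper and lower estimates for each $T_i$ (the paper's Lemmas 3.1 and 3.2), reuse the small- and large-sphere inequalities from the existence theorem, and use $r_0>G_1^{1/k_1}$ (resp.\ $R_0<\Gamma_1E_1^{1/k_1}$) to make the intermediate sphere strictly compressive (resp.\ expansive), yielding two fixed points in the two annuli via the Guo--Krasnoselskii theorem. The chaining of one-sided bounds around the cycle that you flag as the main obstacle is exactly what the paper carries out with the recursive constants $G_i$, $\tilde G_i$, $E_i$ and the nested-box propagation you describe.
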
 

\begin{remark}
	Theorem \ref{Th1.1} and  Theorem \ref{Th1.2} show that the existence and multiplicity of nontrivial radial convex solutions to system \eqref{EQUATION1} respectively, see the penultimate paragraph in this section for more detailed explanations of convex solutions. Since the convex solutions are contained in the $\boldsymbol{k}$-admissible solutions, Theorems \ref{Th1.1} and \ref{Th1.2} show the existence and multiplicity of $\boldsymbol{k}$-admissible solutions as well.
\end{remark}
\begin{remark}  
	It is worth to mention that the condition (C1) and (C3) can be replaced by
	$f_{m_k}(t,0)=0,\ k=1,\ldots,n-1$, where $\{m_1,\ldots,m_{n-1}\}\subset\{1,\ldots,n\}$ and we describe as $f_{i}(t,0)=0$, $i=2,\ldots,n$ for the sake of proof.
\end{remark}

Specifically, we also study the uniqueness and nonexistence of nontrivial radial solutions to a power-type coupled system of $\boldsymbol{k}$-Hessian equations:
\begin{equation}\label{EQUATION2}
	\left\{
	\begin{aligned}
		&S_{k_{1}}\left( D^{2}u_{1}\right) =\left(-u_{2}\right)^{\gamma_1} ,& {\rm in}\ B,\\
		&S_{k_{2}}\left( D^{2}u_{2}\right) =\left(-u_{3}\right)^{\gamma_2} ,& {\rm in}\ B,\\
		&\qquad \qquad \quad \vdots\\
		&S_{k_{n-1}}\left( D^{2}u_{n-1}\right) =\left(-u_{n}\right)^{\gamma_{n-1}} ,& {\rm in}\ B,\\
		&S_{k_{n}}\left( D^{2}u_{n}\right) =\left(-u_{1}\right)^{\gamma_n} ,& {\rm in}\ B,\\
		&u_{i}=0,i=1,\ldots,n,& {\rm on}\ \partial B,
	\end{aligned}
	\right.
\end{equation}
where $\gamma_i\ (i=1,\ldots,n)$ are positive constants.

It is obvious that system \eqref{EQUATION2} is a special case of system \eqref{EQUATION1}. By the definitions of $\alpha_{i}$ or $\beta_{i}$-asymptotic growth condition, the growth of nonlinearities of the power-type system \eqref{EQUATION2} satisfies $\alpha_{i}=\beta_{i}=\gamma_i$, which asserts the existence of nontrivial radial convex solutions to system \eqref{EQUATION2} by Theorem \ref{Th1.1} when $\prod_{i=1}^{n}\gamma_i\ne \prod_{i=1}^{n}k_i$.
Next, we go further to study the uniqueness of nontrivial radial convex solution to system \eqref{EQUATION2} in Theorem \ref{Th1.3}.

\begin{Theorem}\label{Th1.3}(Uniqueness theorem)
Suppose that positive constant $\prod_{i=1}^{n}\gamma_i$ satisfies
$$\prod_{i=1}^{n}\gamma_i<\prod_{i=1}^{n}k_i,$$
then system \eqref{EQUATION2} has a unique nontrivial radial convex solution.
\end{Theorem}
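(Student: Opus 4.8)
The plan is to convert \eqref{EQUATION2} into a scalar fixed-point equation on a cone of positive functions and exploit sublinear homogeneity. Existence is already available: for $f_i(t,c)=c^{\gamma_i}$ one has $\underline f_i^{0}=\overline f_i^{\infty}=1\in(0,+\infty)$ and $f_i(t,0)=0$, so {\rm (C1)} holds, and since $\alpha_i=\beta_i=\gamma_i$ the two inequalities in Theorem \ref{Th1.1}(a) both collapse to $\prod_{i=1}^n\gamma_i<\prod_{i=1}^n k_i$; hence Theorem \ref{Th1.1} yields at least one nontrivial radial convex solution and only uniqueness remains. For a radial convex $u_i=u_i(r)$ with $u_i(1)=0$ we have $u_i'\ge0$, $u_i'(0)=0$, and the $k_i$-Hessian operator admits the divergence form $S_{k_i}(D^2u_i)=c_{N,k_i}\,r^{1-N}\big(r^{N-k_i}(u_i')^{k_i}\big)'$ with $c_{N,k_i}>0$; integrating from $0$ to $r$ and then from $r$ to $1$, and setting $v_i:=-u_i$, one gets the cyclic integral system $v_i=T_iv_{i+1}$ (with $v_{n+1}:=v_1$), where
\[
(T_iw)(r)=\int_r^1\Big(C_i\,t^{k_i-N}\!\int_0^t s^{N-1}w(s)^{\gamma_i}\,ds\Big)^{1/k_i}dt,\qquad C_i=C_i(N,k_i)>0 .
\]

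Thus every radial convex solution yields a fixed point $v_1=Tv_1$ of $T:=T_1\circ\cdots\circ T_n$ in the cone $P$ of nonnegative continuous functions on $[0,1]$; and since $v_j=T_j\circ\cdots\circ T_n v_1$ for each $j$, the whole vector $(v_1,\ldots,v_n)$ is determined by $v_1$, so it suffices to show $T$ has at most one nontrivial fixed point in $P$. Two structural facts drive the argument: (i) each $T_i$, hence $T$, is order-preserving on $P$; and (ii) $T_i(\lambda w)=\lambda^{\gamma_i/k_i}T_iw$ for every $\lambda>0$, so $T$ is positively homogeneous of degree $\rho:=\big(\prod_{i=1}^n\gamma_i\big)/\big(\prod_{i=1}^n k_i\big)$, and $\rho<1$ by hypothesis.

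Next I would pin down the shape of a nontrivial fixed point $v$ of $T$ together with its components $v_1=v,v_2,\ldots,v_n$ defined by $v_j=T_j\circ\cdots\circ T_n v$. If some $v_j\equiv0$ then the cyclic relations force $v\equiv0$; hence, for a nontrivial $v$, every $v_j$ is strictly positive on $[0,1)$, each $v_j$ is $C^1$ up to $r=1$, and
\[
v_1'(1)=-\Big(C_1\!\int_0^1 s^{N-1}v_2(s)^{\gamma_1}\,ds\Big)^{1/k_1}<0 .
\]
Consequently $v(r)/(1-r)$ extends continuously and strictly positively to $[0,1]$, and likewise for any other nontrivial fixed point $w$. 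In particular any two nontrivial fixed points are comparable: $a\,w\le v\le b\,w$ on $[0,1]$ for some constants $0<a\le b$.

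Finally, given nontrivial fixed points $v,w$, set $c:=\min_{[0,1]}(v/w)=\sup\{t>0:v\ge t\,w\text{ on }[0,1]\}\in(0,+\infty)$, so that $v\ge c\,w$. Applying $T$ and using (i)--(ii) gives $v=Tv\ge T(c\,w)=c^{\rho}\,w$, whence $c^{\rho}\le c$; since $\rho<1$ this is possible only if $c\ge1$, i.e.\ $v\ge w$. Interchanging the roles of $v$ and $w$ yields $w\ge v$, so $v=w$, and then $v_j=T_j\circ\cdots\circ T_n v_1$ forces the whole solution to be unique. I expect the comparability step to be the main obstacle: establishing the strict boundary gradient $v_1'(1)<0$ (a Hopf-type fact, here extracted from the explicit integral representation and the strict positivity of $v_2$ on $[0,1)$) and the regularity needed to pass to the limit of $v/w$ at $r=1$; once comparability is secured, the homogeneity–monotonicity argument is short, and the hypothesis $\rho<1$ is used exactly once, in the step $c^{\rho}\le c\Rightarrow c\ge1$.
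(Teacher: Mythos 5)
Your proof is correct, and it rests on the same two pillars as the paper's: the composite operator $T=T_1\circ\cdots\circ T_n$ is monotone and positively homogeneous of degree $\rho=\prod\gamma_i/\prod k_i<1$, and fixed points are comparable to $u_0(t)=1-t$. The difference is in packaging. The paper verifies that $T$ is a $u_0$-sublinear increasing operator in the sense of Hu--Wang (their Definition 4.1), i.e.\ it proves $\theta_1(1-t)\le Tv_1\le\theta_2(1-t)$ for \emph{every} $v_1$ in the cone --- the lower bound coming from the cone inequality $\min_{[1/4,3/4]}v\ge\frac14\|v\|$ together with the concavity estimate $v_i(t)\ge v_i(0)(1-t)$ --- and then invokes the abstract uniqueness lemma for such operators. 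You instead establish comparability only for the two fixed points themselves, extracting the lower bound from the strict boundary derivative $v_1'(1)<0$ (a Hopf-type fact read off the integral representation), and then run the maximal-constant argument $c=\sup\{t:v\ge tw\}$, $c^\rho\le c\Rightarrow c\ge1$, which is precisely the proof of the Hu--Wang lemma unpacked. Your route is marginally more self-contained (no appeal to the $u_0$-sublinearity framework) and only needs comparability at fixed points; the paper's route establishes the stronger sandwich $\theta_1 u_0\le Tv\le\theta_2 u_0$ on the whole cone, which is what the cited lemma formally requires. Both are complete; the hypothesis $\prod\gamma_i<\prod k_i$ enters in the same single place in each (the strict sublinearity of the homogeneity degree).
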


Here, we get the uniqueness result of nontrivial radial convex solution to system \eqref{EQUATION2} in the assumption of $\prod_{i=1}^{n}\gamma_i<\prod_{i=1}^{n}k_i$.  
Besides, we obtain the nonexistence of nontrivial radial $\boldsymbol{k}$-admissible solution in $B$ when $\prod_{i=1}^{n}\gamma_i=\prod_{i=1}^{n}k_i$.

\begin{Theorem}\label{Th1.4}(Nonexistence theorem)
	Suppose that positive constant $\prod_{i=1}^{n}\gamma_i$ satisfies
	$$\prod_{i=1}^{n}\gamma_i=\prod_{i=1}^{n}k_i,$$
	then system \eqref{EQUATION2} admits no nontrivial radial $\boldsymbol{k}$-admissible solution.
\end{Theorem}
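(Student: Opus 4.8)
The plan is to reduce the PDE system to an ODE system for the radial profiles and then use a scaling/homogeneity argument. Write $u_i(x) = y_i(|x|)$ with $r = |x|$. For a radial $k_i$-admissible function vanishing on $\partial B$, the $k_i$-Hessian operator becomes
\begin{equation*}
S_{k_i}(D^2 u_i) = c_{N,k_i}\, r^{1-N}\bigl( r^{N-k_i} (y_i')^{k_i}\bigr)',
\end{equation*}
so \eqref{EQUATION2} turns into a cyclic system of first-order ODEs after the standard substitution. Passing to the change of variables $s = r^{N}$ (or the equivalent reformulation used to set up the fixed-point operator earlier in the paper), each equation becomes, up to positive constants, an Emden--Fowler type relation that is invariant under the scaling $y_i \mapsto \mu^{a_i} y_i$ for a suitable choice of exponents $a_i > 0$. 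The key point is that when $\prod_{i=1}^n \gamma_i = \prod_{i=1}^n k_i$ the scaling exponents $a_i$ can be chosen so that the whole system is scale-invariant: if $\boldsymbol{y} = (y_1,\ldots,y_n)$ is a nontrivial solution, so is $(\mu^{a_1} y_1,\ldots,\mu^{a_n} y_n)$ for every $\mu > 0$.

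Next I would derive the algebraic constraint on the $a_i$. Matching powers of $\mu$ in the $i$-th equation $S_{k_i}(D^2 u_i) = (-u_{i+1})^{\gamma_i}$ (indices cyclic) forces $k_i a_i = \gamma_i a_{i+1}$ for each $i$, i.e. $a_i/a_{i+1} = \gamma_i/k_i$. Multiplying these $n$ relations around the cycle gives $\prod_i (a_i/a_{i+1}) = 1 = \prod_i (\gamma_i/k_i)$, which is exactly the hypothesis $\prod \gamma_i = \prod k_i$. Hence such positive exponents $a_i$ exist (take $a_n = 1$, $a_i = \prod_{j=i}^{n-1}(\gamma_j/k_j)$, and the cycle closes consistently precisely under the assumed equality). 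So the solution set is genuinely a one-parameter family scaled by $\mu$.

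The contradiction comes from the boundary/normalization data. Every admissible solution must satisfy $y_i'(0) = 0$ (smoothness at the center) and $y_i(1) = 0$ (the Dirichlet condition); integrating the ODE from $0$ shows $y_i$ is determined once we know, say, $y_i(0) =: -m_i < 0$ (each $y_i < 0$ in $B$ by the sub-harmonicity/maximum-principle remark in the introduction, and nontriviality forces $m_i > 0$). But the scaling $y_i \mapsto \mu^{a_i} y_i$ sends $m_i \mapsto \mu^{a_i} m_i$ while preserving both $y_i'(0)=0$ and $y_i(1)=0$; letting $\mu$ range over $(0,\infty)$ would produce infinitely many distinct solutions with the same boundary data, contradicting uniqueness from the ODE initial-value problem. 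More precisely, two solutions of the scale-invariant cyclic ODE system that agree in their ``initial slopes'' at $r=0$ must coincide, yet the family $\{\mu^{a_i} y_i\}$ gives a continuum of solutions all meeting the same $0$ boundary condition --- impossible unless the only solution is the trivial one $\boldsymbol{y} \equiv 0$. Therefore no nontrivial radial $\boldsymbol{k}$-admissible solution exists.

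The main obstacle I anticipate is making the ``uniqueness of the ODE'' step rigorous at the degenerate point $r=0$ and at the boundary $r=1$: the first-order ODEs have the factor $(y_i')^{k_i}$, which is not Lipschitz when $k_i \ge 2$ and $y_i' = 0$, so one cannot simply quote Picard--Lindelöf. The way around it is to work with the integrated form used to build the fixed-point operator in the earlier sections: write $y_i(r) = \int_r^1 \bigl( c_{N,k_i}^{-1} t^{k_i-N}\int_0^t \tau^{N-1} (-u_{i+1}(\tau))^{\gamma_i}\, d\tau \bigr)^{1/k_i} dt$, observe this cyclic composition is monotone in the data $\boldsymbol{m} = (m_1,\ldots,m_n)$, and show it is strictly sub-homogeneous or exactly homogeneous of the critical degree so that the only fixed point compatible with $\boldsymbol{y}(1)=0$ is $\boldsymbol{0}$ — equivalently, that the Dirichlet condition $y_i(1)=0$ together with the scaling relation forces $m_i = \mu^{a_i} m_i$ for all $\mu$, hence $m_i = 0$. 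Alternatively, one can phrase the whole argument as: the associated nonlinear eigenvalue-type operator has ``eigenvalue'' $1$ only in the critical case, and a nontrivial solution would give a whole ray of solutions, contradicting the compactness/degree count that produced the fixed point; but the direct scaling contradiction above is cleaner and I would present that.
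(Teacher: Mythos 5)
Your reduction to the radial ODE system and the computation of the scaling exponents $a_i$ with $k_ia_i=\gamma_ia_{i+1}$ are both correct, but the argument collapses at the decisive step: producing the one-parameter family $(\mu^{a_1}y_1,\ldots,\mu^{a_n}y_n)$ is \emph{not} a contradiction. The problem $y_i'(0)=0$, $y_i(1)=0$ is a two-point boundary value problem, not an initial value problem; the scaled solutions all satisfy both boundary conditions but have \emph{different} values $\mu^{a_i}m_i$ at $r=0$, so no two members of the family agree in their data at $r=0$ and no ODE uniqueness statement is violated. Likewise the claim that ``$y_i(1)=0$ together with the scaling relation forces $m_i=\mu^{a_i}m_i$ for all $\mu$'' has no justification: scale invariance only says that solutions come in rays, which is exactly the expected structure of a homogeneous eigenvalue problem \emph{at} an eigenvalue --- indeed Theorem \ref{Th1.5} of the paper exhibits precisely such a ray for the correct value of the parameter. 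A concrete sanity check: your argument never uses that the ball has radius $1$, yet the conclusion fails on balls of other radii. For $n=2$, $k_1=k_2=\gamma_1=\gamma_2=1$ the system reads $\Delta u_1=-u_2$, $\Delta u_2=-u_1$, and on a ball $B_R$ whose first Dirichlet eigenvalue of $-\Delta$ equals $1$ the (negated, radial, one-signed) first eigenfunction $u_1=u_2=\varphi$ is a nontrivial admissible solution. Any proof of Theorem \ref{Th1.4} must therefore use the radius.

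What is actually needed, and what the paper supplies, is a quantitative estimate tied to the unit ball. From the integral representation one gets, for each $i$ and any $w\in K$, $\|T_i w\|\le \tfrac12\bigl(k_i/(NC_{N-1}^{k_i-1})\bigr)^{1/k_i}\|w\|^{\gamma_i/k_i}<\|w\|^{\gamma_i/k_i}$, since $k_i\le N$ and $C_{N-1}^{k_i-1}\ge1$ force the prefactor below $1$ (in fact below $\tfrac12$). Composing these strict inequalities around the cycle and invoking $\prod_{i=1}^n\gamma_i=\prod_{i=1}^n k_i$ yields $\|Tv_1\|<\|v_1\|^{\prod\gamma_i/\prod k_i}=\|v_1\|$ for every nonzero $v_1\in K$, which immediately rules out a nonzero fixed point. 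In your eigenvalue language: the issue is not that the operator is homogeneous of degree one, but that its ``principal eigenvalue'' on the unit ball is strictly smaller than one; that numerical fact is the missing ingredient in your proposal.
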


When $\prod_{i=1}^{n}\gamma_i=\prod_{i=1}^{n}k_i$, we are interested in the existence of nonzero $\boldsymbol{k}$-admissible solutions for the eigenvalue problem:
\begin{equation}\label{EQUATION3}
	\left\{
	\begin{aligned}
		&S_{k_{1}}\left( D^{2}u_{1}\right) =\lambda_1 \left(-u_{2}\right)^{\gamma_1} ,& {\rm in}\ \Omega,\\
		&S_{k_{2}}\left( D^{2}u_{2}\right) =\lambda_2 \left(-u_{3}\right)^{\gamma_2} ,& {\rm in}\ \Omega,\\
		&\qquad \qquad \quad \vdots\\
		&S_{k_{n-1}}\left( D^{2}u_{n-1}\right) =\lambda_{n-1} \left(-u_{n}\right)^{\gamma_{n-1}} ,& {\rm in}\ \Omega,\\
		&S_{k_{n}}\left( D^{2}u_{n}\right) =\lambda_n \left(-u_{1}\right)^{\gamma_n} ,& {\rm in}\ \Omega,\\
		&u_{i}=0,i=1,\ldots,n,& {\rm on}\ \partial \Omega,
	\end{aligned}
	\right.
\end{equation}
with positive parameters $\lambda_{i}\ (i=1,\ldots,n)$, where $\Omega \in\mathbb{R}^{N}$ is a bounded, smooth and strictly $(k-1)$-convex domain, $N\ge2$.

In fact, Wang has proved the existence of a positive eigenvalue $\lambda^*$ for a single $k$-Hessian equation with $f(u)=\lambda|u|^{k}(k<N)$ in \cite{Wang-I}. When $\lambda=\lambda^*$, the corresponding eigenfunction $\varphi^*$ is nonzero $k$-admissible and that any other eigenfunction would be a positive constant multiple of $\varphi^*$. 
Since $\lambda^*$ acts like a bifurcation point for system \eqref{EQUATION3}, we can be reminiscent of the generalized Krein-Rutman theorem in \cite{Jacobsen} to obtain the existence of $\boldsymbol{k}$-admissible solutions to eigenvalue problem \eqref{EQUATION3}.

\begin{Theorem}\label{Th1.5}(Eigenvalue problem)
Suppose that $\Omega \in\mathbb{R}^{N}$ is a bounded, smooth and strictly $(k-1)$-convex domain, positive constant $\prod_{i=1}^{n}\gamma_i$ satisfies $$\prod_{i=1}^{n}\gamma_i=\prod_{i=1}^{n}k_i,$$
then system \eqref{EQUATION3} admits a nonzero $\boldsymbol{k}$-admissible solution if and only if $\lambda_1\lambda_2^{\frac{\gamma_1}{k_2}}\cdots\lambda_n^{\frac{\prod_{i=1}^{n-1}\gamma_i}{\prod_{i=2}^{n}k_i}}=\lambda_0^{k_1}$, where $\lambda_0 \neq 1$  is a positive constant, such that the system
\begin{equation}\label{1.4}
	\left\{
	\begin{aligned}
		&S_{k_{1}}\left( D^{2}(\frac{u_{1}}{\lambda_0})\right) =\left(-u_{2}\right)^{\gamma_1} ,& {\rm in}\ \Omega,\\
		&S_{k_{2}}\left( D^{2}u_{2}\right) =\left(-u_{3}\right)^{\gamma_2} ,& {\rm in}\ \Omega,\\
		&\qquad \qquad \quad \vdots\\
		&S_{k_{n-1}}\left( D^{2}u_{n-1}\right) =\left(-u_{n}\right)^{\gamma_{n-1}} ,& {\rm in}\ \Omega,\\
		&S_{k_{n}}\left( D^{2}u_{n}\right) =\left(-u_{1}\right)^{\gamma_n} ,& {\rm in}\ \Omega,\\
		&u_{i}=0, \quad i=1,\ldots,n,& {\rm on}\ \partial \Omega,
	\end{aligned}
	\right.
\end{equation}
has a nonzero $\boldsymbol{k}$-admissible solution.
\end{Theorem}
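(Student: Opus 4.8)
The plan is to prove the equivalence by a change of scale that reduces \eqref{EQUATION3} to \eqref{1.4}, using only the homogeneity of the $k_i$-Hessian operators, and then to invoke the generalized Krein--Rutman theorem of \cite{Jacobsen} together with Wang's eigenvalue result \cite{Wang-I} to show that a constant $\lambda_0\neq1$ with the required property exists and is uniquely determined, so that the $\lambda_0$ in the statement is well defined. For the scaling step, suppose $\boldsymbol{u}=(u_1,\dots,u_n)$ is a nonzero $\boldsymbol{k}$-admissible solution of \eqref{EQUATION3} and seek positive constants $d_1,\dots,d_n$ such that $v_i:=d_iu_i$ solves \eqref{1.4}. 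Since $S_{k_j}(D^2(d\,w))=d^{k_j}S_{k_j}(D^2w)$, comparing the two systems equation by equation forces $d_1^{k_1}\lambda_1=\lambda_0^{k_1}d_2^{\gamma_1}$, $d_j^{k_j}\lambda_j=d_{j+1}^{\gamma_j}$ for $j=2,\dots,n-1$, and $d_n^{k_n}\lambda_n=d_1^{\gamma_n}$. Taking logarithms gives a cyclic linear system; substituting around the cycle, the coefficient with which $\log d_1$ returns to itself equals $\prod_{i=1}^n\gamma_i/\prod_{i=1}^n k_i=1$ by hypothesis, so $\log d_1$ remains free (this is the one-parameter scaling orbit of solutions of \eqref{1.4}), and the system is consistent if and only if $k_1\log\lambda_0=\log\lambda_1+\tfrac{\gamma_1}{k_2}\log\lambda_2+\cdots+\tfrac{\prod_{i=1}^{n-1}\gamma_i}{\prod_{i=2}^{n}k_i}\log\lambda_n$; writing $\Lambda:=\lambda_1\lambda_2^{\gamma_1/k_2}\cdots\lambda_n^{\prod_{i=1}^{n-1}\gamma_i/\prod_{i=2}^{n}k_i}$, this says $\Lambda=\lambda_0^{k_1}$. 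Since multiplying a function by a positive constant preserves each cone $\overline{\sigma}_{k_i}$ and the sign of $u_i$, it follows that for $\lambda_0=\Lambda^{1/k_1}$ and the $d_i$ chosen accordingly, $\boldsymbol{v}=(d_iu_i)$ is a nonzero $\boldsymbol{k}$-admissible solution of \eqref{1.4}; conversely, if $\Lambda=\lambda_0^{k_1}$ and \eqref{1.4} has a nonzero $\boldsymbol{k}$-admissible solution $\boldsymbol{v}$, the computation run in reverse yields positive $d_i$ with $u_i:=v_i/d_i$ solving \eqref{EQUATION3}.

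It remains to pin down $\lambda_0$. For $\lambda_0>0$ fixed, let $\mathcal{T}_j$ send a nonpositive $w\in C(\overline{\Omega})$ to the unique $k_j$-admissible solution of $S_{k_j}(D^2z)=(-w)^{\gamma_j}$ in $\Omega$ with $z=0$ on $\partial\Omega$; since $\Omega$ is bounded, smooth and strictly $(k-1)$-convex, hence strictly $(k_j-1)$-convex for every $j$, this Dirichlet problem is uniquely solvable with $z\le0$, and $\mathcal{T}_j$ is compact, order-preserving and positively homogeneous of degree $\gamma_j/k_j$ on the cone of nonpositive admissible functions. Therefore $\mathcal{S}:=\mathcal{T}_1\circ\cdots\circ\mathcal{T}_n$ is compact, order-preserving and positively homogeneous of degree $\prod_{i=1}^n\gamma_i/\prod_{i=1}^n k_i=1$, and unwinding \eqref{1.4} around the cycle (using uniqueness of the Dirichlet solutions) shows that \eqref{1.4} has a nonzero $\boldsymbol{k}$-admissible solution exactly when $1/\lambda_0$ is an eigenvalue of $\mathcal{S}$ with eigenfunction in the cone. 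The generalized Krein--Rutman theorem of \cite{Jacobsen} yields a principal eigenvalue $\mu^{\ast}>0$ with an admissible eigenfunction, and Wang's analysis of $S_k(D^2u)=\lambda|u|^k$ in \cite{Wang-I} identifies $\mu^{\ast}$ and shows its eigenfunction is unique up to positive multiples; setting $\lambda_0:=1/\mu^{\ast}$ (the case $\lambda_0\neq1$ singled out in the statement) gives the required constant, unique by the uniqueness of $\mu^{\ast}$. Combining this with the scaling step shows that \eqref{EQUATION3} has a nonzero $\boldsymbol{k}$-admissible solution if and only if $\Lambda=\lambda_0^{k_1}$ for this $\lambda_0$, which is the assertion of the theorem.

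The step I expect to be the main obstacle is the second one: verifying carefully that each $\mathcal{T}_j$ is well defined, compact and strictly order-preserving on the appropriate cone of negative $k_j$-admissible functions, which relies on the existence and a priori theory for the $k_j$-Hessian Dirichlet problem on strictly $(k-1)$-convex domains; that $\boldsymbol{k}$-admissibility is inherited along the composition $\mathcal{S}$; and that the abstract Krein--Rutman framework of \cite{Jacobsen} genuinely applies to the degree-one homogeneous operator $\mathcal{S}$, so that its principal eigenvalue $\mu^{\ast}$ can be matched with $1/\lambda_0$. By contrast, the cyclic homogeneity computation in the first step is routine once the scaling law of the $k_i$-Hessian operators is used.
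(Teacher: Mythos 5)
Your overall strategy coincides with the paper's: reduce \eqref{EQUATION3} to the one--parameter problem by rescaling the components (your cyclic log--linear system is a clean way to organize the computation the paper does by successively setting $\tilde u_n=\lambda_n^{-1/k_n}u_n$, etc., and correctly isolates the consistency condition $\Lambda=\lambda_0^{k_1}$ using $\prod\gamma_i/\prod k_i=1$), and then apply the generalized Krein--Rutman theorem to the composite Dirichlet solution operator $\mathcal{S}=\mathcal{T}_1\circ\cdots\circ\mathcal{T}_n$, which is compact, monotone and positively homogeneous of degree one.

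However, there is a genuine gap at exactly the point you defer. The generalized Krein--Rutman theorem of \cite{Jacobsen} does not merely require $\mathcal{S}$ to be compact, homogeneous and order--preserving; it requires $\mathcal{S}$ to be \emph{strong} relative to the cone, i.e.\ for any two nonzero $u,v\in \mathrm{Im}(\mathcal{S})\cap P$ there must exist $\delta,\gamma>0$ with $u-\delta v\in P$ and $v-\gamma u\in P$. This comparability condition is not the same as being ``strictly order-preserving,'' and it is the main technical content of the proof: without it the theorem gives neither the existence nor the uniqueness of the eigenvalue $\mu^{\ast}$, and your identification $\lambda_0=1/\mu^{\ast}$ has no foundation. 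Verifying it is delicate here because the admissible weak solutions produced by Trudinger's existence theorem are only $C^{0}(\overline{\Omega})$, so one cannot apply the Hopf lemma to $u$ and $v$ directly. The paper handles this by bounding each image $u=\overline{T}v$ from below by a $C^{2,\alpha}$ solution $\omega$ of $S_{k_1}(D^2\omega)=G$ with $G=\max|v|^{\gamma_1}$ (via the comparison principle), transferring the boundary normal-derivative information from $\omega$ to $u$ through one-sided difference quotients, invoking the Hopf lemma for the smooth barrier, and then splicing the resulting boundary-strip estimate with an interior infimum of $u/v$ to produce $\delta$; a symmetric argument gives $\gamma$. Your appeal to Wang's single-equation analysis of $S_k(D^2u)=\lambda|u|^k$ to ``identify $\mu^{\ast}$ and show its eigenfunction is unique up to positive multiples'' does not substitute for this: Wang's result concerns one equation, not the coupled composite operator, and in the paper it serves only as motivation; both the existence and the uniqueness of $\lambda_0$ for the system come from the abstract theorem once the strong property is established. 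Until that verification is supplied, the second half of your argument is incomplete.
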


Note that the existence of nonzero $\boldsymbol{k}$-admissible solution of \eqref{1.4} is guaranteed by a generalized Krein-Rutman theorem, see Section \ref{Section 5} for details.

In this article, we study the existence and multiplicity of radial convex solutions to system \eqref{Th1.1}, the uniqueness of radial convex solution and nonexistence of radial $\boldsymbol{k}$-admissible solution to system \eqref{Th1.2}, and the existence of radial $\boldsymbol{k}$-admissible solutions to the related eigenvalue problem \eqref{EQUATION3}.
The reasons why Theorems \ref{Th1.1}, \ref{Th1.2} and \ref{Th1.3} are only restricted to the convex solutions will be further explained in Remarks \ref{Rm2.1} and \ref{Rm4.1}.
The improvement from convex solutions to $\boldsymbol{k}$-admissible solutions in Theorem \ref{Th1.1}, Theorem \ref{Th1.2} and Theorem \ref{Th1.3} is still an interesting problem, which attracts us to find another way or technique to solve this problem in a sequel.

The rest of the paper is organized as follows. 
In Section \ref{Section 2}, we make some preliminary calculations of $C^2$ radial solutions and present a fixed point theorem in Theorem \ref{Lemma1.1}. 
In Section \ref{Section 3}, we give the proof of existence and multiplicity results for system \eqref{EQUATION1} with general nonlinearities by using the fixed point theorem.
In Section \ref{Section 4}, the uniqueness and nonexistence results for power-type coupled system \eqref{EQUATION2} which is a special case of \eqref{EQUATION1} are considered. 
In Section \ref{Section 5}, by overcoming the difficulties caused by verifying the condition of generalized Krein-Rutman theorem which to prove the operator is strong, we obtain the existence of nonzero $\boldsymbol{k}$-admissible solutions to the eigenvalue problem \eqref{EQUATION3} in a general strictly $(k-1)$-convex domain.

\vspace{3mm}

\section{Preliminaries}\label{Section 2}

To study radial classical solutions of system \eqref{EQUATION1}, we assume $u(|x|)=u(t)$ be the radial function with $t=\sqrt{\sum_{i=1}^{N}x_{i}^{2}}$, then it follows from Lemma 2.1 in \cite{JB} that the $k$-Hessian operator becomes 
\begin{equation*}
S_{k}(D^{2}u)=C_{N-1}^{k-1} u^{\prime\prime}(t)(\frac{u^{\prime}(t)}{t})^{k-1}+C_{N-1}^{k}(\frac{u^{\prime}(t)}{t})^{k},\quad t\in (0,1).
\end{equation*}

Then we can convert \eqref{EQUATION1} to the following system of ordinary differential equations:
\begin{equation}\label{EQUATION2.1}
 \left\{
	\begin{array}{ll}
	C_{N-1}^{k_{1}-1}u_{1}^{\prime\prime}(t)(\frac{u_{1}^{\prime}(t)}{t})^{k_{1}-1}+C_{N-1}^{k_{1}}(\frac{u_{1}^{\prime}(t)}{t})^{k_{1}}=f_{1}(t,-u_{2}),\quad&0<t<1,\\[1ex]
	
	C_{N-1}^{k_{2}-1}u_{2}^{\prime\prime}(t)(\frac{u_{2}^{\prime}(t)}{t})^{k_{2}-1}+C_{N-1}^{k_{2}}(\frac{u_{2}^{\prime}(t)}{t})^{k_{2}}=f_{2}(t,-u_{3}),\quad&0<t<1,\\
	
	\qquad \qquad\vdots\\
	
	C_{N-1}^{k_{n-1}-1}u_{n-1}^{\prime\prime}(t)(\frac{u_{n-1}^{\prime}(t)}{t})^{k_{n-1}-1}+C_{N-1}^{k_{n-1}}(\frac{u_{n-1}^{\prime}(t)}{t})^{k_{n-1}}=f_{n-1}(t,-u_{n}),&0<t<1,\\[1ex]
	
	C_{N-1}^{k_{n}-1}u_{n}^{\prime\prime}(t)(\frac{u_{n}^{\prime}(t)}{t})^{k_{n}-1}+C_{N-1}^{k_{n}}(\frac{u_{n}^{\prime}(t)}{t})^{k_{n}}=f_{n}(t,-u_{1}),\quad&0<t<1,\\[1ex]
	
	u_{i}(1)=u_{i}^{\prime}(0)=0,\quad i=1,\ldots,n.
	\end{array}
 \right.
\end{equation}

Equivalently, we seek nonnegative $k$-concave solutions for convenience by making a simple transformation $v_{i}=-u_{i}\ (i=1,\ldots,n)$ in \eqref{EQUATION2.1}, which leads to the following system:
\begin{equation}\label{EQUATION2.2}
\left\{
   \begin{array}{ll}
   	C_{N-1}^{k_{1}-1}(-v_{1})^{\prime\prime}(t)(\frac{(-v_{1})^{\prime}(t)}{t})^{k_{1}-1}+C_{N-1}^{k_{1}}(\frac{(-v_{1})^{\prime}(t)}{t})^{k_{1}}=f_{1}(t,v_{2}),&0<t<1,\\[1ex]
   	
   	C_{N-1}^{k_{2}-1}(-v_{2})^{\prime\prime}(t)(\frac{(-v_{2})^{\prime}(t)}{t})^{k_{2}-1}+C_{N-1}^{k_{2}}(\frac{(-v_{2})^{\prime}(t)}{t})^{k_{2}}=f_{2}(t,v_{3}),&0<t<1,\\
   	
   	\qquad \qquad\vdots\\
   	
   	C_{N-1}^{k_{n-1}-1}(-v_{n-1})^{\prime\prime}(t)(\frac{(-v_{n-1})^{\prime}(t)}{t})^{k_{n-1}-1}+C_{N-1}^{k_{n-1}}(\frac{(-v_{n-1})^{\prime}(t)}{t})^{k_{n-1}}=f_{n-1}(t,v_{n}),&0<t<1,\\[1ex]
   	
   	C_{N-1}^{k_{n}-1}(-v_{n})^{\prime\prime}(t)(\frac{(-v_{n})^{\prime}(t)}{t})^{k_{n}-1}+C_{N-1}^{k_{n}}(\frac{(-v_{n})^{\prime}(t)}{t})^{k_{n}}=f_{n}(t,v_{1}),&0<t<1,\\[1ex]
   	
   	v_{i}(1)=v_{i}^{\prime}(0)=0,\quad i=1,\ldots,n.
   \end{array}
\right.	
\end{equation} 

By integration, we get from \eqref{EQUATION2.2} that
\begin{equation*}
\left\{
   \begin{array}{ll}
	v_{1}(t)=\int_t^1\left( \frac{k_{1}}{\tau^{N-k_{1}}}\int_0^\tau \frac{s^{N-1}}{C_{N-1}^{k_{1}-1}}f_{1}\left( s,v_{2}(s)\right)\,{\rm d}s \right)^{\frac{1}{k_{1}} }\,{\rm d}\tau,  &0\le t\le 1, \\
	
    v_{2}(t)=\int_t^1\left( \frac{k_{2}}{\tau^{N-k_{2}}}\int_0^\tau \frac{s^{N-1}}{C_{N-1}^{k_{2}-1}}f_{2}\left( s,v_{3}(s)\right)\,{\rm d}s \right)^{\frac{1}{k_{2}} }\,{\rm d}\tau,  &0\le t\le 1, \\
    
    \qquad\qquad\vdots\\
    
    v_{n-1}(t)=\int_t^1\left( \frac{k_{n-1}}{\tau^{N-k_{n-1}}}\int_0^\tau \frac{s^{N-1}}{C_{N-1}^{k_{n-1}-1}}f_{n-1}\left( s,v_{n}(s)\right)\,{\rm d}s \right)^{\frac{1}{k_{n-1}} }\,{\rm d}\tau,  &0\le t\le 1, \\
    
    v_{n}(t)=\int_t^1\left( \frac{k_{n}}{\tau^{N-k_{n}}}\int_0^\tau \frac{s^{N-1}}{C_{N-1}^{k_{n}-1}}f_{n}\left( s,v_{1}(s)\right)\,{\rm d}s \right)^{\frac{1}{k_{n}} }\,{\rm d}\tau,  &0\le t\le 1.
   \end{array}
\right.	
\end{equation*} 

Considering the Banach space $X:=C[0,1]$, for $\boldsymbol{v}=(v_{1},\ldots,v_{n})\in \underbrace{X \times \cdots \times X}_{n}$, we define $||\boldsymbol{v}||=\sum_{i=1}^{n}||\boldsymbol{v_{i}}(t)||=\sum_{i=1}^{n}\sup\limits_{t\in[0,1]}|v_{i}(t)|$. Let $K$ be a cone in $X$ defined as 
\begin{equation}\label{cone}
	K:=\left\lbrace v\in X:v(t)\ge 0, t\in [0,1],\min_{\frac{1}{4}\le t\le \frac{3}{4}}v(t)\ge \frac{1}{4}||v||\right\rbrace .
\end{equation}

We define the operators $T_{i}:K \to X\ (i=1,\ldots,n)$ to be
\begin{equation*}
	\begin{aligned}
		&T_{1}(v_{2})(t)=\int_t^1\left( \frac{k_{1}}{\tau^{N-k_{1}}}\int_0^\tau \frac{s^{N-1}}{C_{N-1}^{k_{1}-1}}f_{1}\left( s,v_{2}(s)\right)\,{\rm d}s \right)^{\frac{1}{k_{1}} }\,{\rm d}\tau,  \\
		&T_{2}(v_{3})(t)=\int_t^1\left( \frac{k_{2}}{\tau^{N-k_{2}}}\int_0^\tau \frac{s^{N-1}}{C_{N-1}^{k_{2}-1}}f_{2}\left( s,v_{3}(s)\right)\,{\rm d}s \right)^{\frac{1}{k_{2}} }\,{\rm d}\tau,   \\
		&\qquad\qquad\vdots\\
		&T_{n-1}(v_{n})(t)=\int_t^1\left( \frac{k_{n-1}}{\tau^{N-k_{n-1}}}\int_0^\tau \frac{s^{N-1}}{C_{N-1}^{k_{n-1}-1}}f_{n-1}\left( s,v_{n}(s)\right)\,{\rm d}s \right)^{\frac{1}{k_{n-1}} }\,{\rm d}\tau,   \\
		&T_{n}(v_{1})(t)=\int_t^1\left( \frac{k_{n}}{\tau^{N-k_{n}}}\int_0^\tau \frac{s^{N-1}}{C_{N-1}^{k_{n}-1}}f_{n}\left( s,v_{1}(s)\right)\,{\rm d}s \right)^{\frac{1}{k_{n}} }\,{\rm d}\tau.
	\end{aligned}
\end{equation*} 

Note that each image of operator is a nonnegative $k$-concave function on $[0,1]$ and we define $T_{1}(v_{2})=v_{1},T_{2}(v_{3})=v_{2},\cdots,T_{n}(v_{1})=v_{n}$ in $K$. Thus, by the concavity of $v_i\ (i=1, \ldots, n)$, it is easy to see that $T_{i}\ (i=1,\ldots,n)$ maps $K$ into itself. Besides, by standard arguments, we know that every operator is completely continuous.

Next, we define a composite operator $Tv_{1}=T_{1}T_{2} \cdots T_{n}(v_{1})$, which is also completely continuous from $K$ to $K$. We can see that positive solutions of \eqref{EQUATION2.2} are equivalent to nonzero fixed points of operator $T$ in cone $K$. If $\boldsymbol{v}=(v_{1},\ldots,v_{n})\in \underbrace{C[0,1] \times \cdots \times C[0,1]}_{n}$ is a positive solution of \eqref{EQUATION2.2}, then $v_{1}$ must be a nonzero fixed point of $T$ in $K$; conversely if $v_{1}\in K\setminus\left\lbrace 0  \right\rbrace $ is a fixed point of $T$, we can define $v_{n}=T_{n}(v_{1}),v_{n-1}=T_{n-1}(v_{n}),\cdots,v_{2}=T_{2}(v_{3})$ such that $(v_{1},\ldots,v_{n})\in \underbrace{C[0,1] \times \cdots \times C[0,1]}_{n}$ solves \eqref{EQUATION2.2}.

\begin{remark}\label{Rm2.1}
	As we shall see in the last two paragraphs, we let each $T_i\ (i=1\ldots,n)$ maps $K$ to itself which implies that $v^{\prime}(t)=(-u)^{\prime}(t)$ is nonincreasing from Lemma 2.2 in \cite{WHY}.
	On the other hand, the eigenvalues of the second derivative of radial classical function in a unit ball can be represented by $\lambda(D^2u)=(u^{\prime\prime}(t),\frac{u^{\prime}(t)}{t},\ldots,\frac{u^{\prime}(t)}{t}),t\in[0,1]$, we combine this with the definition of $\boldsymbol{k}$-admissible function, an immediate consequence is that we essentially achieve the $(N-1)$-admissible function in $\mathbb{R}^{N}$, that is, all $\frac{u^{\prime}(t)}{t}\ge 0$. 	
	To sum up, all eigenvalues of the Hessian matrix of nontrivial radial $\boldsymbol{k}$-admissible solutions of system \eqref{EQUATION1} and system \eqref{EQUATION2} are nonnegative and exist in its closure of convex cone, which can draw our conclusion.
\end{remark}

The proofs of our existence and multiplicity results are based on the following well-known fixed point theorem of cone, (see Theorem 2.3.4 in Guo \cite{Guo}).
\begin{Theorem}\label{Lemma1.1}
	Let $X$ be a Banach space and $K$ is a cone in $X$. Assume that $\Omega_{1}$, $\Omega_{2}$ are bounded open subsets of $X$ with $0\in\Omega_{1}$, $\overline{\Omega}_{1}\subset\Omega_{2}$ and let
	$$T: K\cap(\overline{\Omega}_{2} \setminus \Omega_{1})\to K$$
	be completely continuous such that either
\begin{enumerate}[(i)]
	\item  $||Tu||\ge||u||$, $u\in K\cap \partial\Omega_{1}$ and $||Tu||\le||u||$, $u\in K\cap \partial\Omega_{2}$; or
	\item  $||Tu||\le||u||$, $u\in K\cap \partial\Omega_{1}$ and $||Tu||\ge||u||$, $u\in K\cap \partial\Omega_{2}$
	\end{enumerate}
holds, where $||\cdot||$ is a norm in X, $\Omega_{R}=\{u\in K:||u||<R\}$ and $\partial\Omega_{R}=\{u\in K:||u||=R\}$. Then T has a fixed point in $K\cap(\overline{\Omega}_{2} \setminus \Omega_{1})$.
\end{Theorem}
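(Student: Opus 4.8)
The plan is to prove this via the fixed point index $i(T,K\cap\Omega,K)$ for completely continuous self-maps of the cone $K$, exploiting its standard properties: normalization (a constant map $u\mapsto u_{0}$ has index $1$ if $u_{0}\in K\cap\Omega$ and $0$ if $u_{0}\notin K\cap\overline{\Omega}$), homotopy invariance, additivity/excision, and the solution property that a nonzero index forces a fixed point. First I would dispose of the trivial case: since $K\cap\partial\Omega_{1}$ and $K\cap\partial\Omega_{2}$ both lie inside $K\cap(\overline{\Omega}_{2}\setminus\Omega_{1})$ (here one uses $\overline{\Omega}_{1}\subset\Omega_{2}$), if $T$ fixes any point of either relative boundary the conclusion already holds. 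So I may assume $Tu\neq u$ for all $u\in K\cap(\partial\Omega_{1}\cup\partial\Omega_{2})$. Because $0\in\Omega_{1}$ is interior and $\overline{\Omega}_{1}\subset\Omega_{2}$, no boundary point is the origin, and $\inf\{\|u\|:u\in K\cap\partial\Omega_{j}\}>0$ for $j=1,2$.

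The core of the argument is two index computations. First, if $\|Tu\|\le\|u\|$ on $K\cap\partial\Omega$ (with no boundary fixed point), then $i(T,K\cap\Omega,K)=1$: I would use the $K$-valued homotopy $h(t,u)=tTu$, $t\in[0,1]$ (note $tTu\in K$ since $K$ is a cone). A crossing $tTu=u$ on $K\cap\partial\Omega$ would give $\|u\|=t\|Tu\|\le t\|u\|$, impossible for $t\in[0,1)$ as $\|u\|>0$, and impossible for $t=1$ by the no-fixed-point assumption; homotopy invariance together with normalization then yields $i(T,K\cap\Omega,K)=i(0,K\cap\Omega,K)=1$. Second, if $\|Tu\|\ge\|u\|$ on $K\cap\partial\Omega$, then $i(T,K\cap\Omega,K)=0$: I would fix $e\in K\setminus\{0\}$ and consider $h(t,u)=Tu+te$, $t\ge0$ (again $K$-valued). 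Since $\Omega$ and $T(K\cap\overline{\Omega})$ are bounded, any solution of $u=Tu+te$ in $K\cap\overline{\Omega}$ keeps $t$ in a bounded interval $[0,t_{*}]$, so for $t_{1}>t_{*}$ the map $h(t_{1},\cdot)$ is fixed-point-free and has index $0$; provided $h$ has no fixed point on $K\cap\partial\Omega$ throughout $t\in[0,t_{1}]$, homotopy invariance gives $i(T,K\cap\Omega,K)=0$.

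Granting these two facts, the theorem follows by additivity. In case (i) I apply the expansion lemma on $\Omega_{1}$ and the compression lemma on $\Omega_{2}$, obtaining $i(T,K\cap\Omega_{1},K)=0$ and $i(T,K\cap\Omega_{2},K)=1$. As $T$ has no fixed point on $K\cap\partial\Omega_{1}$, additivity gives $i(T,K\cap(\Omega_{2}\setminus\overline{\Omega}_{1}),K)=1-0=1\neq0$, so $T$ has a fixed point in $K\cap(\Omega_{2}\setminus\overline{\Omega}_{1})\subset K\cap(\overline{\Omega}_{2}\setminus\Omega_{1})$. Case (ii) is entirely symmetric: the two indices are interchanged, the annular index equals $0-1=-1\neq0$, and the solution property again produces a fixed point in the required annulus.

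The step I expect to be the main obstacle is the index-$0$ (expansion) computation, specifically excluding fixed points of $h(t,u)=Tu+te$ on $K\cap\partial\Omega$ for intermediate $t$: the bare inequality $\|Tu\|\ge\|u\|$ does not by itself forbid a crossing $u=Tu+te$, and this is precisely where the cone geometry must be used. In the concrete setting of this paper, where $K$ is a subcone of the positive cone of $C[0,1]$ and $\|\cdot\|$ is the sup norm, I would take $e$ to be the constant function $1$ (which lies in $K$), so that $u=Tu+te$ forces $Tu=u-te\ge0$ pointwise, whence $\|Tu\|=\max u-t<\|u\|$ for every $t>0$, directly contradicting $\|Tu\|\ge\|u\|$ and ruling out all boundary crossings. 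For the abstract statement quoted here, this is exactly the standard index-$0$ lemma underlying Theorem~2.3.4 of Guo~\cite{Guo}, which may be cited rather than reproved; equivalently one reformulates the hypothesis as $Tu\neq\mu u$ for $u\in K\cap\partial\Omega$ and $\mu\in(0,1]$ (immediate from $\|Tu\|\ge\|u\|$) and feeds it into that lemma.
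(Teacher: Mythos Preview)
The paper does not prove this theorem at all: it is quoted verbatim as ``the following well-known fixed point theorem of cone, (see Theorem 2.3.4 in Guo \cite{Guo})'' and used as a black box, so there is no ``paper's own proof'' to compare against. Your outline via the fixed point index on the cone (the compression/expansion dichotomy $i=1$ versus $i=0$ followed by additivity on the annulus) is precisely the standard route by which Theorem~2.3.4 is established in Guo--Lakshmikantham, so in effect you are supplying the proof that the paper merely cites.

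One small caution on your final sentence: the reformulation ``$Tu\neq\mu u$ for $\mu\in(0,1]$'' is \emph{not} by itself the hypothesis of the standard index-$0$ lemma; that condition alone only obstructs the homotopy $t\mapsto tTu$ and is tied to the index-$1$ computation, not the index-$0$ one. For the abstract index-$0$ step you really do need the $e$-translation homotopy $u\mapsto Tu+te$ together with a cone-geometric argument (e.g.\ in Guo's book one uses that $\|Tu\|\ge\|u\|$ with $Tu\neq u$ forces $u\neq Tu+te$ for all $t\ge0$ via an auxiliary lemma on cones). Your concrete $C[0,1]$ argument with $e\equiv1$ is fine for the application at hand, but the clean abstract route is simply to invoke the index-$0$ lemma in \cite{Guo} directly rather than recast it as a $\mu\in(0,1]$ condition.
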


\vspace{3mm}

\section{Existence and multiplicity}\label{Section 3}

In this section, we apply the fixed point theorem of cone in Theorem \ref{Lemma1.1} to prove the existence and multiplicity results in Theorem \ref{Th1.1} and Theorem \ref{Th1.2}. To simplify notation, we denote $v_1$ by $v_{n+1}$ .
\subsection{Existence}\label{Section 3.1}

In order to prove the Theorem \ref{Th1.1}, we first introduce two useful lemmas.

\begin{Lemma}\label{lower bound}
	Assume {\rm (F)} holds. Let $\eta,m>0$ and $v_{i}\in K,\ i=1,\ldots,n$. If for any $t\in [\frac{1}{4},\frac{3}{4}]$ and $i=1,\ldots,n$, we have
	$$f_{i}\left( t,v_{i+1}(t)\right) \ge \eta v_{i+1}^{m}(t),$$
then
\begin{equation*} 
        T_{i}(v_{i+1})(\frac{1}{4})\ge \Gamma_{i}\eta^{\frac{1}{k_{i}}}(\frac{1}{4})^{\frac{m}{k_{i}}}||v_{i+1}||^{\frac{m}{k_{i}}}, \quad i=1,\ldots,n,
\end{equation*}
where $\Gamma_{i}$ are positive constants given by  $\Gamma_{i}=\int_{\frac{1}{4}}^{\frac{3}{4}}\left( \frac{k_{i}}{\tau^{N-k_{i}}}\int_{\frac{1}{4}}^{\tau}\frac{s^{N-1}}{C_{N-1}^{k_{i}-1}}\,{\rm d}s \right)^{\frac{1}{k_{i}}}\,{\rm d}\tau ,i=1,\ldots,n$.
\end{Lemma}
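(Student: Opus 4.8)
The plan is to evaluate $T_{i}(v_{i+1})$ at the single point $t=\frac14$ and then estimate it from below by a short chain of monotonicity steps, each of which merely discards a nonnegative contribution by shrinking a domain of integration. The only ingredients are the sign condition $f_{i}\ge 0$ from (F), the hypothesis $f_{i}(t,v_{i+1}(t))\ge\eta v_{i+1}^{m}(t)$ on $[\frac14,\frac34]$, and the cone inequality built into \eqref{cone}.

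First I would write $T_{i}(v_{i+1})(\frac14)=\int_{1/4}^{1}(\cdots)^{1/k_{i}}\,d\tau$ and, since the integrand is nonnegative on $[0,1]$ (because $f_{i}\ge0$), drop the part of the outer integral over $[\frac34,1]$ to get $T_{i}(v_{i+1})(\frac14)\ge\int_{1/4}^{3/4}(\cdots)^{1/k_{i}}\,d\tau$. Then, for $\tau\in[\frac14,\frac34]$, I would replace the inner integral $\int_{0}^{\tau}$ by $\int_{1/4}^{\tau}$, which is again legitimate because $s^{N-1}f_{i}(s,v_{i+1}(s))\ge0$. At this stage the variable $s$ ranges only over $[\frac14,\tau]\subset[\frac14,\frac34]$, so both remaining hypotheses are applicable there.

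Next I would apply, on $s\in[\frac14,\frac34]$, the growth bound $f_{i}(s,v_{i+1}(s))\ge\eta v_{i+1}^{m}(s)$ followed by the cone estimate $v_{i+1}(s)\ge\frac14\|v_{i+1}\|$ from \eqref{cone}, which yields $f_{i}(s,v_{i+1}(s))\ge\eta\bigl(\frac14\bigr)^{m}\|v_{i+1}\|^{m}$. Substituting and pulling the constant $\eta(\frac14)^{m}\|v_{i+1}\|^{m}$ out of the inner integral, then through the $1/k_{i}$-th power and the outer integral, leaves exactly
\[
\eta^{1/k_{i}}\Bigl(\tfrac14\Bigr)^{m/k_{i}}\|v_{i+1}\|^{m/k_{i}}\int_{1/4}^{3/4}\Bigl(\frac{k_{i}}{\tau^{N-k_{i}}}\int_{1/4}^{\tau}\frac{s^{N-1}}{C_{N-1}^{k_{i}-1}}\,ds\Bigr)^{1/k_{i}}\,d\tau,
\]
and the remaining integral is precisely the definition of $\Gamma_{i}$. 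This gives the asserted bound, uniformly in $i=1,\ldots,n$.

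There is no genuine obstacle in this lemma; it is essentially a bookkeeping argument. The one place that needs care is the order of operations in the final step: the exponent $1/k_{i}$ must be taken only after the constant has been completely factored out, so that $\eta$, the factor $(\frac14)^{m}$, and $\|v_{i+1}\|^{m}$ each acquire the exponent $1/k_{i}$. Equally, one must keep track of the direction of each inequality, which is exactly why the nonnegativity assertion in (F) is invoked at every domain-restriction step.
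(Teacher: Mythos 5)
Your proposal is correct and follows essentially the same argument as the paper: restrict the outer integral to $[\frac14,\frac34]$ and the inner one to $[\frac14,\tau]$ using nonnegativity of $f_i$, then apply the growth hypothesis together with the cone bound $v_{i+1}(s)\ge\frac14\|v_{i+1}\|$ and factor out the constant, which produces exactly $\Gamma_i\eta^{1/k_i}(\frac14)^{m/k_i}\|v_{i+1}\|^{m/k_i}$.
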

\begin{proof}
	For $v_{1}\in K$, we have
\begin{equation*}
   \begin{split}
	T_{n}(v_{1})(\frac{1}{4})
	=& \int_{\frac{1}{4}}^1\left( \frac{k_{n}}{\tau^{N-k_{n}}}\int_0^\tau \frac{s^{N-1}}{C_{N-1}^{k_{n}-1}}f_{n}\left( s,v_{1}(s)\right)\,{\rm d}s \right)^{\frac{1}{k_{n}} }\,{\rm d}\tau\\
	\ge& \int_{\frac{1}{4}}^{\frac{3}{4}}\left( \frac{k_{n}}{\tau^{N-k_{n}}}\int_{\frac{1}{4}}^\tau \frac{s^{N-1}}{C_{N-1}^{k_{n}-1}} \eta v_{1}^{m}(s)\,{\rm d}s \right)^{\frac{1}{k_{n}} }\,{\rm d}\tau\\
	\ge& \int_{\frac{1}{4}}^{\frac{3}{4}}\left( \frac{k_{n}}{\tau^{N-k_{n}}}\int_{\frac{1}{4}}^\tau \frac{s^{N-1}}{C_{N-1}^{k_{n}-1}} \eta \left( \frac{1}{4}||v_{1}||\right)^{m} \,{\rm d}s \right)^{\frac{1}{k_{n}} }\,{\rm d}\tau\\
	=&\Gamma_{n}\eta^{\frac{1}{k_{n}}}\left( \frac{1}{4}||v_{1}||\right)^{\frac{m}{k_{n}}}.
  \end{split}	
\end{equation*}
For $v_{i}\in K\ (i=2,\ldots,n)$, we have similar calculations. Here we omit them for simplicity.
\end{proof}

\begin{Lemma}\label{upper bound}
	Assume {\rm (F)} holds. Let $\varepsilon,d>0$ and $v_{i}\in K,i=1,\ldots,n$. If for any $t\in [0,1]$ and $i=1,\ldots,n$, we have
    $$f_{i}\left( t,v_{i+1}(t)\right) \le \varepsilon v_{i+1}^{d}(t),$$
	then
\begin{equation*} 
    T_{i}(v_{i+1})(t)<\left( \varepsilon||v_{i+1}||^{d}\right) ^{\frac{1}{k_{i}}}, \quad i=1,\ldots,n.
\end{equation*}
\end{Lemma}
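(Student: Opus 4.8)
The statement is the upper‑bound counterpart of Lemma \ref{lower bound}, and the plan is to read the estimate off directly from the integral formula for $T_i$. Fix $i\in\{1,\dots,n\}$ and $t\in[0,1]$. Since $v_{i+1}\in K$ we have $0\le v_{i+1}(s)\le\|v_{i+1}\|$ for all $s\in[0,1]$, so the hypothesis yields the crude but sufficient pointwise bound $f_i(s,v_{i+1}(s))\le\varepsilon v_{i+1}^d(s)\le\varepsilon\|v_{i+1}\|^d$ for every $s$. Substituting this into the inner integral and using $\int_0^\tau s^{N-1}\,{\rm d}s=\tau^N/N$ gives
$$\frac{k_i}{\tau^{N-k_i}}\int_0^\tau\frac{s^{N-1}}{C_{N-1}^{k_i-1}}f_i\!\left(s,v_{i+1}(s)\right){\rm d}s\ \le\ \frac{k_i}{N\,C_{N-1}^{k_i-1}}\,\varepsilon\|v_{i+1}\|^d\,\tau^{k_i}.$$

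Next I would take the $k_i$‑th root and integrate over $\tau\in[t,1]$, using $\int_t^1\tau\,{\rm d}\tau=(1-t^2)/2\le\tfrac12$, to obtain
$$T_i(v_{i+1})(t)\ \le\ \Big(\frac{k_i}{N\,C_{N-1}^{k_i-1}}\Big)^{\!1/k_i}\big(\varepsilon\|v_{i+1}\|^d\big)^{1/k_i}\,\frac{1-t^2}{2}.$$
The one thing to record is the elementary identity $N\,C_{N-1}^{k_i-1}=k_i\,C_{N}^{k_i}$, which gives $k_i/(N\,C_{N-1}^{k_i-1})=1/C_N^{k_i}\le1$, hence $\big(k_i/(N\,C_{N-1}^{k_i-1})\big)^{1/k_i}\le1$. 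Combining this with $(1-t^2)/2\le\tfrac12<1$ yields $T_i(v_{i+1})(t)\le\tfrac12\big(\varepsilon\|v_{i+1}\|^d\big)^{1/k_i}<\big(\varepsilon\|v_{i+1}\|^d\big)^{1/k_i}$, which is the claim; the same argument applies verbatim for every $i$, reading $v_{n+1}=v_1$.

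There is no genuine obstacle here: the estimate is routine and in fact loose by a factor $\tfrac12$. The only point one should not overlook is the size of the normalizing constant $k_i/(N\,C_{N-1}^{k_i-1})$; the combinatorial identity above shows it is at most $1$ regardless of $N$ and $k_i$, and that is exactly what makes the strict inequality hold uniformly in $t\in[0,1]$. (Strictness of course presumes $v_{i+1}\not\equiv0$, which is the situation in which the lemma is applied.)
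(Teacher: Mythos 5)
Your proof is correct and follows essentially the same route as the paper: bound $f_i(s,v_{i+1}(s))\le\varepsilon\|v_{i+1}\|^d$, evaluate the inner integral to get the factor $\tfrac{1}{2}\bigl(k_i/(NC_{N-1}^{k_i-1})\bigr)^{1/k_i}(\varepsilon\|v_{i+1}\|^d)^{1/k_i}$, and observe that the prefactor is less than $1$. The only difference is that you actually verify the prefactor bound via the identity $NC_{N-1}^{k_i-1}=k_iC_N^{k_i}$ (the paper merely calls it ``easily checked''), and you correctly flag that strictness degenerates when $v_{i+1}\equiv 0$, a caveat the paper omits.
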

\begin{proof}
		Since $v_{1}(t)\in K$, ${\forall t}$ $\in[0,1]$, we have
	\begin{equation}\label{Lemma3.2}
		\begin{split}
			T_{n}(v_{1})(t)
			\le& \int_0^1\left( \frac{k_{n}}{\tau^{N-k_{n}}}\int_0^\tau \frac{s^{N-1}}{C_{N-1}^{k_{n}-1}}f_{n}\left( s,v_{1}(s)\right)\,{\rm d}s \right)^{\frac{1}{k_{n}} }\,{\rm d}\tau\\
			\le& \int_0^1\left( \frac{k_{n}}{\tau^{N-k_{n}}}\int_0^\tau \frac{s^{N-1}}{C_{N-1}^{k_{n}-1}}\varepsilon v_{1}^{d}(s)\,{\rm d}s\right)^{\frac{1}{k_{n}} }\,{\rm d}\tau\\
			\le& \frac{1}{2}\left( \frac{k_{n}}{NC_{N-1}^{k_{n}-1}}\right)^{\frac{1}{k_{n}}}\left( \varepsilon||v_{1}||^{d}\right)^{\frac{1}{k_{n}}} \\
			<&\left(\varepsilon||v_{1}||^{d} \right) ^{\frac{1}{k_{n}}},
		\end{split}	
	\end{equation}
where the fact $\frac{1}{2}\left( \frac{k_{n}}{NC_{N-1}^{k_{n}-1}}\right)^{\frac{1}{k_{n}}}<1$ is used in the last inequality, which is easily checked.
For $v_{i}\in K\ (i=2,\ldots,n)$, we also have similar conclusions.
\end{proof}

On the basis of the above preparations, we give the proof for the existence result in Theorem \ref{Th1.1} with the aid of the fixed point theorem of cone.
\begin{proof}[Proof of Theorem \ref{Th1.1}]
(a). It follows from $\underline{f}_{i}^{0}\in (0,+\infty)\ (i=1,\ldots,n)$ that for any given $\varepsilon_{1}\in  (0,\min\{\underline{f}_{i}^{0},i=1,\ldots,n\})$, there exists a constant $r_{1}\in(0,1)$ such that
\begin{equation}\label{Equation3.3}
f_{i}\left( t,v_{i+1}(t)\right)  \ge (\underline{f}_{i}^{0}-\varepsilon_{1})v_{i+1}^{\alpha_{i}}, \quad 0\le v_{i+1}\le r_{1},
\end{equation}
for any $t\in[0,1]$ and $i=1,\ldots,n$.
Let 
\begin{equation*}
	L_{1}:=\Gamma_{1}\cdots \Gamma_{n}^{\frac{\prod_{i=1}^{n-1}\alpha_{i}}{\prod_{i=1}^{n-1}k_{i}}}
	( \underline{f}_{1}^{0}-\varepsilon_{1} )^{\frac{1}{k_{1}}} 
	\cdots
	( \underline{f}_{n}^{0}-\varepsilon_{1} )^{\frac{\prod_{i=1}^{n-1}\alpha_{i}}{\prod_{i=1}^{n}k_{i}}}
	( \frac{1}{4}) ^{\frac{\alpha_{1}}{k_{1}}+\cdots+\frac{\prod_{i=1}^{n}\alpha_{i}}{\prod_{i=1}^{n}k_{i}}}
\end{equation*}
be a positive constant. Since $f_{i}(t,0)=0$ for $i=2,\ldots,n$, there exists another constant $r_{2}$:
$$0<r_{2}<{\rm min}\left\lbrace r_{1},L_{1}^{\frac{\prod_{i=1}^{n}k_{i}}{\prod_{i=1}^{n}k_{i}-\prod_{i=1}^{n}
\alpha_{i}}}\right\rbrace $$
such that
\begin{equation}\label{Equation3.4}
f_{i}\left( t,v_{i+1}(t)\right) \le r_{1}^{k_{i}},\quad 0\le v_{i+1}\le r_{2},
\end{equation}
for any $t\in[0,1]$ and $i=2,\ldots,n$.
For $v_{1}\in K\cap\partial\Omega_{r_{2}}$, it follows from Lemma \ref{upper bound} and \eqref{Equation3.4} that 
\begin{equation*}
v_{i}(t)=T_{i}(v_{i+1})(t)<r_{1},\quad i=2,\ldots,n
\end{equation*}
which shows that for any $v_{1}\in  K\cap\partial\Omega_{r_{2}}$, we have $v_{i}\in(0,r_{1})$, for all $i=1,\ldots,n$.
Then by Lemma \ref{lower bound} and \eqref{Equation3.3}, we get
\begin{equation*}
T_{i}(v_{i+1})(\frac{1}{4})\ge \Gamma_{i}(\underline{f}_{i}^{0}-\varepsilon_{1})^{\frac{1}{k_{i}}}(\frac{1}{4})^{\frac{\alpha_{i}}{k_{i}}}||v_{i+1}||^{\frac{\alpha_{i}}{k_{i}}}, \quad i=1,\ldots,n.
\end{equation*}
This suggests that for any  $v_{1}\in  K\cap\partial\Omega_{r_{2}}$, we have
\begin{equation*}
	\begin{split}
		||Tv_{1}||=&\sup_{t\in [0,1]}\left| T_{1}T_{2}\cdots T_{n}(v_{1})(t)\right| \\
		\ge&T_{1}T_{2}\cdots T_{n}(v_{1})(\frac{1}{4})\\
		\ge&\Gamma_{1}(\underline{f}_{1}^{0}-\varepsilon_{1})^{\frac{1}{k_{1}}}(\frac{1}{4})^{\frac{\alpha_{1}}{k_{1}}}||T_{2}(v_{3})||^{\frac{\alpha_{1}}{k_{1}}}\\
		\ge&\Gamma_{1}(\underline{f}_{1}^{0}-\varepsilon_{1})^{\frac{1}{k_{1}}}(\frac{1}{4})^{\frac{\alpha_{1}}{k_{1}}}|T_{2}(v_{3})(\frac{1}{4})|^{\frac{\alpha_{1}}{k_{1}}}\\
		\ge&\Gamma_{1}\Gamma_{2}^{\frac{\alpha_{1}}{k_{1}}}
		(\underline{f}_{1}^{0}-\varepsilon_{1})^{\frac{1}{k_{1}}}
		( \underline{f}_{2}^{0}-\varepsilon_{1} )^{\frac{\alpha_{1}}{k_{1}k_{2}}}
		(\frac{1}{4})^{\frac{\alpha_{1}}{k_{1}}+\frac{\alpha_{2}\alpha_{2}}{k_{1}k_{2}}}||T_{3}(v_{4})||^{\frac{\alpha_{1}\alpha_{2}}{k_{1}k_{2}}}\\
		&\qquad\vdots\\
		\ge&L_{1}||v_{1}||^{\frac{\prod_{i=1}^{n}\alpha_{i}}{\prod_{i=1}^{n}k_{i}}}.
	\end{split}
\end{equation*}
Notice that $||v_{1}||=r_{2}< L_{1}^{\frac{\prod_{i=1}^{n}k_{i}}{\prod_{i=1}^{n}k_{i}-\prod_{i=1}^{n}\alpha_{i}}} $ and $\prod_{i=1}^{n}\alpha_{i}<\prod_{i=1}^{n}k_{i}$, then
\begin{equation*}
	\frac{L_{1}||v_{1}||^{\frac{\prod_{i=1}^{n}\alpha_{i}}{\prod_{i=1}^{n}k_{i}}}}{||v_{1}||}=\frac{L_{1}}{||v_{1}||^{\frac{\prod_{i=1}^{n}k_{i}-\prod_{i=1}^{n}\alpha_{i}}{\prod_{i=1}^{n}k_{i}}}}>1,
\end{equation*}
which implies that 
\begin{equation}\label{Th1.1ge}
	||Tv_{1}||>||v_{1}||,\quad v_{1}\in K\cap \partial\Omega_{r_{2}}.
\end{equation}

On the other hand, it can be obtained from  $\overline{f}_{i}^{\infty}\in (0,+\infty)$ that for any given $\varepsilon_{2}>0$, there exists a constant $R_{1}>1$ such that for any $t\in [0,1]$ and $i=1,\ldots,n$, 
\begin{equation}\label{Equation3.12}
f_{i}\left( t,v_{i+1}(t)\right)  \le (\overline{f}_{i}^{\infty}+\varepsilon_{2})v_{i+1}^{\beta_{i}}, \quad v_{i+1}\ge R_{1}.
\end{equation}
Furthermore, by the continuity of $f_{i}\ (i=1,\ldots,n)$, there exist constants $M_{i}(R_{1})>0\ (i=1,\ldots,n)$ such that for any $(t,v_{i+1}(t))\in [0,1]\times[0,R_{1}]$,
\begin{equation}\label{Equation3.13}
f_{i}\left( t,v_{i+1}(t)\right)  \le M_{i}(R_{1}), \quad i=1,\ldots,n.
\end{equation}
Combining \eqref{Equation3.12} with \eqref{Equation3.13}, we have for any $(t,v_{i+1}(t))\in [0,1]\times \left[ 0,+\infty\right)$,
\begin{equation}\label{Equation3.14}
f_{i}\left( t,v_{i+1}(t)\right)  \le M_{i}(R_{1})+ (\overline{f}_{i}^{\infty}+\varepsilon_{2})v_{i+1}^{\beta_{i}}, \quad i=1,\ldots,n.
\end{equation}
Then from the Lemma \ref{upper bound} and \eqref{Equation3.14}, we have for any $t\in [0,1]$,
\begin{equation*}
T_{i}(v_{i+1})(t) \le \left[ M_{i}(R_{1})+(\overline{f}_{i}^{\infty}+\varepsilon_{2})||v_{i+1}||^{\beta_{i}}\right] ^{\frac{1}{k_{i}}},\quad i=1,\ldots,n.
\end{equation*}
Let
\begin{equation*}
   \begin{aligned}
	&H:=M_{1}(R_{1})^{\frac{1}{k_{1}}}+\cdots+(\overline{f}_{1}^{\infty}+\varepsilon_{2})^{\frac{1}{k_{1}}}\cdots(\overline{f}_{n-1}^{\infty}+\varepsilon_{2})^{\frac{\prod_{i=1}^{n-2}\beta_{i}}{\prod_{i=1}^{n-1}k_{i}}}M_{n}(R_{1})^{\frac{\prod_{i=1}^{n-1}\beta_{i}}{\prod_{i=1}^{n}k_{i}}},\\
    &L_{2}:=(\overline{f}_{1}^{\infty}+\varepsilon_{2})^{\frac{1}{k_{1}}}\cdots(\overline{f}_{n}^{\infty}+\varepsilon_{2})^{\frac{\prod_{i=1}^{n-1}\beta_{i}}{\prod_{i=1}^{n}k_{i}}}.
   \end{aligned}
\end{equation*} 
Thus, there exists a large constant $R_{2}$:
$$R_{2}>{\rm max}\left\lbrace R_{1},2H,(2L_{2})^{\frac{\prod_{i=1}^{n}k_{i}}{\prod_{i=1}^{n}k_{i}-\prod_{i=1}^{n}\beta_{i}}} \right\rbrace $$
such that for any $v_{1}\in K\cap \partial\Omega_{R_{2}}$ and $t\in [0,1]$,
\begin{equation*}
	\begin{split}
		Tv_{1}(t)=&T_{1}T_{2}\cdots T_{n}(v_{1})(t)\\
		\le&\left[ M_{1}(R_{1})+(\overline{f}_{1}^{\infty}+\varepsilon_{2})||T_{2}(v_{3})||^{\beta_{1}}\right] ^{\frac{1}{k_{1}}}\\
		\le&M_{1}(R_{1})^{\frac{1}{k_{1}}}+\left[ (\overline{f}_{1}^{\infty}+\varepsilon_{2})||T_{2}(v_{3})||^{\beta_{1}}\right] ^{\frac{1}{k_{1}}}\\	
		\le&M_{1}(R_{1})^{\frac{1}{k_{1}}}+
		(\overline{f}_{1}^{\infty}+\varepsilon_{2})^{\frac{1}{k_{1}}}M_{2}(R_{1})^{\frac{\beta_{1}}{k_{1}k_{2}}}+
		(\overline{f}_{1}^{\infty}+\varepsilon_{2})^{\frac{1}{k_{1}}}(\overline{f}_{2}^{\infty}+\varepsilon_{2})^{\frac{\beta_{1}}{k_{1}k_{2}}}||v_{3}||^{\frac{\beta_{1}\beta_{2}}{k_{1}k_{2}}}\\
	    &\qquad\vdots\\
	    \le&H+L_{2}||v_{1}||^{\frac{\prod_{i=1}^{n}\beta_{i}}{\prod_{i=1}^{n}k_{i}}}.
	\end{split}
\end{equation*}
Since $\prod_{i=1}^{n}\beta_{i}<\prod_{i=1}^{n}k_{i}$, we get that for any $v_{1}\in K\cap\partial\Omega_{R_{2}}$, 
\begin{equation*}
	\frac{H+L_{2}||v_{1}||^{\frac{\prod_{i=1}^{n}\beta_{i}}{\prod_{i=1}^{n}k_{i}}}}{||v_{1}||}=\frac{H}{||v_{1}||}+\frac{L_{2}}{||v_{1}||^{\frac{\prod_{i=1}^{n}k_{i}-\prod_{i=1}^{n}\beta_{i}}{\prod_{i=1}^{n}k_{i}}}}<1,
\end{equation*}
which implies that 
\begin{equation}\label{Th1.1le}
	||Tv_{1}||<||v_{1}||,\quad v_{1}\in K\cap\partial\Omega_{R_{2}}.
\end{equation}

Therefore, combining with \eqref{Th1.1ge} and \eqref{Th1.1le}, it follows from Theorem \ref{Lemma1.1} that $T$ has at least one fixed point in $K\cap \left(  \overline{\Omega}_{R_{2}} \setminus \Omega_{r_{2}} \right) $.

(b). By the assumption of $\overline{f}_{i}^{0}\in (0,+\infty)$, for any given $\eta_{1}>0$, there exists a positive constant $r_{3}<1$ such that for any $t\in[0,1]$ and $i=1,\ldots,n$,
\begin{equation}\label{Equation3.20}
f_{i}\left( t,v_{i+1}(t)\right)  \le (\overline{f}_{i}^{0}+\eta_{1})v_{i+1}^{\alpha_{i}}, \quad v_{i+1}\in [0,r_{3}].
\end{equation}
Let 
\begin{equation*}
	L_{3}:=(\overline{f}_{1}^{0}+\eta_{1})^{\frac{1}{k_{1}}}
(\overline{f}_{2}^{0}+\eta_{1})^{\frac{\alpha_{1}}{k_{1}k_{2}}}\cdots
(\overline{f}_{n}^{0}+\eta_{1})^{\frac{\prod_{i=1}^{n-1}\alpha_{i}}{\prod_{i=1}^{n}k_{i}}}.
\end{equation*}
Since $\overline{f}_{i}^{0}\in (0,+\infty)$, there exists another constant $r_{4}$:
$$0<r_{4}<\min\left\lbrace r_{3},L_{3}^{
	\frac{\prod_{i=1}^{n}k_{i}}	
	{\prod_{i=1}^{n}k_{i}-\prod_{i=1}^{n}\alpha_{i}}
}\right\rbrace$$
such that for $i=2,\ldots,n$,
\begin{equation}\label{Equation3.21}
f_{i}(t,v_{i+1}(t))\le r_{3}^{k_{i}},\quad (t,v_{i+1}(t))\in [0,1]\times[0,r_{4}].
\end{equation}
Then for any $v_{1}\in K\cap \partial\Omega_{r_{4}}$, it follows from Lemma \ref{upper bound} and \eqref{Equation3.21} that 
\begin{equation*}
v_{i}(t)=T_{i}(v_{i+1})(t)\le r_{3},\quad (t,v_{i+1}(t))\in [0,1]\times[0,r_{4}],\quad i=2,\ldots,n.
\end{equation*}
Thus, By Lemma \ref{upper bound} and \eqref{Equation3.20}, we get 
\begin{equation*}
T_{i}(v_{i+1})(t)\le  \left[ (\overline{f}_{i}^{0}+\eta_{1})||v_{i+1}||^{\alpha_{i}}\right] ^{\frac{1}{k_{i}}}, \quad i=1,\ldots,n,
\end{equation*}
for any $t\in [0,1]$.
For $v_{1}\in K\cap \partial\Omega_{r_{4}}$, we have 
\begin{equation*}
	\begin{split}
		||Tv_{1}||=&\sup_{t\in [0,1]}\left| T_{1}T_{2}\cdots T_{n}(v_{1})(t)\right| \\
		\le& \left[ (\overline{f}_{1}^{0}+\eta_{1})||T_{2}(v_{3})||^{\alpha_{1}}\right] ^{\frac{1}{k_{1}}}\\
		\le&(\overline{f}_{1}^{0}+\eta_{1})^{\frac{1}{k_{1}}}
		\left[(\overline{f}_{2}^{0}+\eta_{1})||T_{3}(v_{4})||^{\alpha_{2}}\right] ^{\frac{\alpha_{1}}{k_{1}k_{2}}}\\
		\le&(\overline{f}_{1}^{0}+\eta_{1})^{\frac{1}{k_{1}}}
		(\overline{f}_{2}^{0}+\eta_{1})^{\frac{\alpha_{1}}{k_{1}k_{2}}}
		\left[ (\overline{f}_{3}^{0}+\eta_{1})||T_4(v_{5})||^{\alpha_{3}}\right] ^{\frac{\alpha_{1}\alpha_{2}}{k_{1}k_{2}k_{3}}}\\
		&\qquad\vdots\\
		\le&L_{3}||v_{1}||^{\frac{\prod_{i=1}^{n}\alpha_{i}}{\prod_{i=1}^{n}k_{i}}}.
	\end{split}
\end{equation*}
Recalling that $\prod_{i=1}^{n}\alpha_{i}>\prod_{i=1}^{n}k_{i}$, then 
\begin{equation*}
   \frac
{ L_{3}||v_{1}||^{\frac{\prod_{i=1}^{n}\alpha_{i}}{\prod_{i=1}^{n}k_{i}}} }
{||v_{1}||}
   =\frac
{L_{3}}
{||v_{1}||^{\frac{\prod_{i=1}^{n}k_{i}-\prod_{i=1}^{n}\alpha_{i}}{\prod_{i=1}^{n}k_{i}}}}
   <1,
	\end{equation*}
which implies that 
\begin{equation}\label{Th1.2le}
	||Tv_{1}||<||v_{1}||,\quad v_{1}\in K\cap\partial\Omega_{r_{4}}.
\end{equation}

On the other hand, it follows from $\underline{f}_{i}^{\infty}\in (0,+\infty)$ that for any given  $\eta_{2}\in (0,\min\{\underline{f}_{i}^{\infty},i=1,\ldots,n\}) $, there exists a constant $R_{3}>1$ such that 
\begin{equation}\label{Equation3.28}
f_{i}\left( t,v_{i+1}(t)\right)  \ge (\underline{f}_{i}^{\infty}-\eta_{2})v_{i+1}^{\beta_{i}}, \quad v_{i+1}\ge R_{3},
\end{equation}
for any $t\in[0,1]$ and $i=1,\ldots,n$.
Let
\begin{equation*}
		L_{4}:=\Gamma_{1}\cdots\Gamma_{n}^{\frac{\prod_{i=1}^{n-1}\beta_{i}}{\prod_{i=1}^{n-1}k_{i}}}
		(\underline{f}_{1}^{\infty}-\eta_{2})^{\frac{1}{k_{1}}}
		\cdots
		(\underline{f}_{n}^{\infty}-\eta_{2})^{\frac{\prod_{i=1}^{n-1}\beta_{i}}{\prod_{i=1}^{n}k_{i}}}
		(\frac{1}{4})^{\frac{\beta_{1}}{k_{1}}+\cdots+\frac{\prod_{i=1}^{n}\beta_{i}}{\prod_{i=1}^{n}k_{i}}}.
\end{equation*}
There exists another constant $R_{4}$:
\begin{equation}\label{Equation3.29}
	R_{4}> \left\lbrace 4R_{3},L_{4}^{\frac{\prod_{i=1}^{n}k_{i}}{\prod_{i=1}^{n}k_{i}-\prod_{i=1}^{n}\beta_{i}}},L_5\right\rbrace 
\end{equation}
such that for any $v_{1}\in K\cap \partial\Omega_{R_{4}}$, we have 
\begin{equation}\label{Equation3.30}
	\min_{\frac{1}{4}\le t\le\frac{3}{4}}v_{1}(t)\ge\frac{1}{4}||v_{1}||=\frac{1}{4}R_{4}>R_{3},
\end{equation} 
where $$L_5:=
\max_{l \in \{2,\ldots,n\}}
 \left( \frac{4R_{3}}
{\Gamma_{l}
\cdots\Gamma_{n}^{\frac{\prod_{i=l}^{n-1}\beta_{i}}{\prod_{i=l}^{n-1}k_{i}}}
(\underline{f}_{l}^{\infty}-\eta_{2})^{\frac{1}{k_{l}}}
\cdots(\underline{f}_{n}^{\infty}-\eta_{2})^{\frac{\prod_{i=l}^{n-1}\beta_{i}}{\prod_{i=l}^{n}k_{i}}}
(\frac{1}{4})^{\frac{\beta_{l}}{k_{l}}+\cdots+\frac{\prod_{i=l}^{n}\beta_{l}}{\prod_{i=l}^{n}k_{l}}}   }
\right) ^{\frac{\prod_{i=l}^{n}k_{i}}{\prod_{i=l}^{n}\beta_{i}}}.$$
Here in $L_5$, when $l=n$ we set $\frac{\prod_{i=l}^{n-1}\beta_{i}}{\prod_{i=l}^{n-1}k_{i}}=1$, so that the terms $\frac{\prod_{i=l}^{n-1}\beta_{i}}{\prod_{i=l}^{n-1}k_{i}}$ make sense for all $i=2, \ldots, n$.
Combining \eqref{Equation3.28} and \eqref{Equation3.29}, it follows from Lemma \ref{lower bound}  that for any $v_{1}\in K\cap \partial\Omega_{R_{4}}$,
\begin{equation}\label{Equation3.31}
	\begin{aligned}
		&||v_{n}||\ge v_{n}(\frac{1}{4})=T_{n}(v_{1})(\frac{1}{4})\ge \Gamma_{n}(\underline{f}_{n}^{\infty}-\eta_{2})^{\frac{1}{k_{n}}}(\frac{1}{4})^{\frac{\beta_{n}}{k_{n}}}||v_{1}||^{\frac{\beta_{n}}{k_{n}}}>4R_{3},\\
		&||v_{n-1}||\ge v_{n-1}(\frac{1}{4})=T_{n-1}(v_{n})(\frac{1}{4})\ge \Gamma_{n-1}(\underline{f}_{n-1}^{\infty}-\eta_{2})^{\frac{1}{k_{n-1}}}(\frac{1}{4})^{\frac{\beta_{n-1}}{k_{n-1}}}||v_{n}||^{\frac{\beta_{n-1}}{k_{n-1}}}>4R_{3},\\
		&\qquad\vdots\\
		&||v_{2}||\ge v_{2}(\frac{1}{4})=T_{2}(v_{3})(\frac{1}{4})\ge \Gamma_{2}(\underline{f}_{2}^{\infty}-\eta_{2})^{\frac{1}{k_{2}}}(\frac{1}{4})^{\frac{\beta_{2}}{k_{2}}}||v_{3}||^{\frac{\beta_{2}}{k_{2}}}>4R_{3}.
	\end{aligned}
\end{equation}
From \eqref{Equation3.30} and \eqref{Equation3.31}, we get that for any $v_{1}\in K\cap \partial\Omega_{R_{4}}$,
\begin{equation*}
	\min_{\frac{1}{4}\le t\le \frac{3}{4}} v_{i}(t) \ge \frac{1}{4} ||v_{i}|| \ge R_{3},\quad i=1,\ldots,n. 
\end{equation*}
Then by Lemma \ref{lower bound}, we deduce that 
\begin{equation*}
	\begin{split}
		Tv_{1}(\frac{1}{4})=&T_{1}T_{2}\cdots T_{n}(v_{1})(\frac{1}{4})\\
		\ge&\Gamma_{1}(\underline{f}_{1}^{\infty}-\eta_{2})^{\frac{1}{k_{1}}}(\frac{1}{4})^{\frac{\beta_{1}}{k_{1}}}||T_{2}(v_{3})||^{\frac{\beta_{1}}{k_{1}}}\\
		\ge&\Gamma_{1}(\underline{f}_{1}^{\infty}-\eta_{2})^{\frac{1}{k_{1}}}(\frac{1}{4})^{\frac{\beta_{1}}{k_{1}}}
		\left(\Gamma_{2}(\underline{f}_{2}^{\infty}-\eta_{2})^{\frac{1}{k_{2}}}(\frac{1}{4})^{\frac{\beta_{2}}{k_{2}}}||T_3(v_{4})||^{\frac{\beta_{2}}{k_{2}}} \right) 
		^{\frac{\beta_{1}}{k_{1}}}\\
		&\qquad\vdots\\
		\ge&L_{4}||v_{1}||^{\frac{\prod_{i=1}^{n}\beta_{i}}{\prod_{i=1}^{n}k_{i}}}.
	\end{split}
\end{equation*}
Since $\prod_{i=1}^{n}\beta_{i}>\prod_{i=1}^{n}k_{i}$, it follows from 
\begin{equation*}
	\frac{L_{4}||v_{1}||^{\frac{\prod_{i=1}^{n}\beta_{i}}{\prod_{i=1}^{n}k_{i}}}}{||v_{1}||}=\frac{L_{4}}{||v_{1}||^{\frac{\prod_{i=1}^{n}k_{i}-\prod_{i=1}^{n}\beta_{i}}{\prod_{i=1}^{n}k_{i}}}}>1
\end{equation*}
that 
\begin{equation}\label{Th1.2ge}
	||Tv_{1}||>||v_{1}||,\quad v_{1}\in K\cap \partial\Omega_{R_{4}}.
\end{equation}

Therefore, from Theorem \ref{Lemma1.1} combining  \eqref{Th1.2le} and \eqref{Th1.2ge}, we obtain that $T$ has at least one fixed point in $K\cap \left(  \overline{\Omega}_{R_{4}} \setminus \Omega_{r_{4}} \right) $.
\end{proof}

\subsection{Multiplicity}\label{Section 3.2}

In Section \ref{Section 3.1}, applying the fixed-point theorem in Theorem \ref{Lemma1.1},  we achieve the existence result in a cone with different combinations of asymptotic growth condition and relations of $\alpha_{i},\beta_{i}$ and $k_{i}$. In order to obtain the multiplicity of nontrivial radial convex solutions of \eqref{EQUATION1}, we recombine the conditions in Theorem \ref{Th1.1} and find two kinds of ``intermediate state" as in \eqref{Th1.3le} and \eqref{Th1.4ge}.

\begin{proof}[Proof of Theorem \ref{Th1.2}] 
(c). As we assumed, for any $(t,v_{1}(t))\in [0,1]\times[0,\frac{r_{0}}{4}]$, we have 
\begin{equation*}
	||v_{1}||\le 4 \min_{\frac{1}{4}\le t\le \frac{3}{4}}v_{1}(t)\le r_{0}.
\end{equation*}
Then for $v_{1}\in K\cap \partial\Omega_{r_{0}}$, by the definition of $G_{n}$, we have 
\begin{equation*}
	\begin{split}
		v_{n}(t)=T_{n}(v_{1})(t)
		\le& \int_0^1\left( \frac{k_{n}}{\tau^{N-k_{n}}}\int_0^\tau \frac{s^{N-1}}{C_{N-1}^{k_{n}-1}}f_{n}\left( s,v_{1}(s)\right)\,{\rm d}s \right)^{\frac{1}{k_{n}} }\,{\rm d}\tau\\
		\le& \int_0^1\left( \frac{k_{n}}{\tau^{N-k_{n}}}\int_0^\tau \frac{s^{N-1}}{C_{N-1}^{k_{n}-1}}   G_{n}
		\right)^{\frac{1}{k_{n}} }\,{\rm d}\tau\\
		=&\frac{1}{2}\left(\frac{k_{n}G_{n}}{NC_{N-1}^{k_{n}-1}} \right) ^{\frac{1}{k_{n}} }\\
		<& G_{n}^{\frac{1}{k_{n}} },
	\end{split}
\end{equation*}
for any $t\in[0,1]$.
Similarly, for $v_{1}\in K\cap \partial\Omega_{r_{0}}$,  we have $v_{i}(t) \le G_{i}^{\frac{1}{k_{i}}}\ (i=2,\ldots,n-1)$, $\forall t\in[0,1]$.
Therefore, by the definition of $G_{1}$, we have
\begin{equation*}
	\begin{split}
		Tv_{1}(t)=&T_{1}T_{2}\cdots T_{n}(v_{1})(t)\\
		=& \int_t^1\left( \frac{k_{1}}{\tau^{N-k_{1}}}\int_0^\tau \frac{s^{N-1}}{C_{N-1}^{k_{1}-1}}f_{1}\left( s,v_{2}(s)\right)\,{\rm d}s \right)^{\frac{1}{k_{1}} }\,{\rm d}\tau\\
		\le& \int_0^1\left( \frac{k_{1}}{\tau^{N-k_{1}}}\int_0^\tau \frac{s^{N-1}}{C_{N-1}^{k_{1}-1}}G_{1}\,{\rm d}s \right)^{\frac{1}{k_{1}} }\,{\rm d}\tau\\
		=&\frac{1}{2}\left(\frac{k_{1}G_{1}}{NC_{N-1}^{k_{1}-1}} \right) ^{\frac{1}{k_{1}} }\\
		<&G_{1}^{\frac{1}{k_{1}}},\quad\forall t\in[0,1],
	\end{split}	
\end{equation*}
which implies that 
\begin{equation}\label{Th1.3le}
	||Tv_{1}||<||v_{1}||,\quad v_{1}\in K\cap \partial\Omega_{r_{0}}.
\end{equation}

Since $\prod_{i=1}^{n}\alpha_{i}<\prod_{i=1}^{n}k_{i}$, $\prod_{i=1}^{n}\beta_{i}>\prod_{i=1}^{n}k_{i}$, it follows from Theorem \ref{Th1.1} that there exist sufficient small constant $r_{2}\in (0,r_{0})$ and sufficient large constant $R_{4}>r_{0}$ such that 
\begin{equation}\label{Th1.3ge}
	||Tv_{1}||\ge ||v_{1}||,\quad v_{1}\in K\cap \partial\Omega_{r_{2}} \quad {\rm and} \quad 
    ||Tv_{1}||\ge ||v_{1}||,\quad v_{1}\in K\cap \partial\Omega_{R_{4}}.
\end{equation}

Note that for any $v_{1}\in K\cap \partial\Omega_{r_{0}}$, $||Tv_{1}||\neq||v_{1}||$, which shows that $T$ has no fixed point in $K\cap \partial\Omega_{r_{0}}$. Combining \eqref{Th1.3le} and \eqref{Th1.3ge}, it follows from Theorem \ref{Lemma1.1} that there exist at least two fixed points of $T$ in $K\cap \left( \overline{\Omega}_{r_{0}}\setminus \Omega_{r_{2}}\right) $ and $K\cap \left( \overline{\Omega}_{R_{4}}\setminus \Omega_{r_{0}}\right) $ respectively.
	
(d). For $v_{1}\in K\cap \partial\Omega_{R_{0}}$, by the definition of $\tilde{G}_{n}$, we have 
	\begin{equation*}
		\begin{split}
		v_{n}(t)=T_{n}(v_{1})(t)
		\le& \int_0^1\left( \frac{k_{n}}{\tau^{N-k_{n}}}\int_0^\tau \frac{s^{N-1}}{C_{N-1}^{k_{n}-1}}f_{n}\left( s,v_{1}(s)\right)\,{\rm d}s \right)^{\frac{1}{k_{n}} }\,{\rm d}\tau\\
		\le& \int_0^1\left( \frac{k_{n}}{\tau^{N-k_{n}}}\int_0^\tau \frac{s^{N-1}}{C_{N-1}^{k_{n}-1}}  \tilde{G}_{n}\,{\rm d}s
		\right)^{\frac{1}{k_{n}} }\,{\rm d}\tau\\
		=&\frac{1}{2}\left(\frac{k_{n}\tilde{G}_{n}}{NC_{N-1}^{k_{n}-1}} \right) ^{\frac{1}{k_{n}} }\\
		<& \tilde{G}_{n}^{\frac{1}{k_{n}} },\quad\forall t\in[0,1].
	\end{split}	
	\end{equation*}
Besides, by the definition of $E_n$, we get that for any $v_{1}\in K\cap \partial\Omega_{R_{0}}$,
	\begin{equation*}
	\begin{split}
		v_{n}(\frac{1}{4})=T_{n}(v_{1})(\frac{1}{4})
		=& \int_{\frac{1}{4}}^1\left( \frac{k_{n}}{\tau^{N-k_{n}}}\int_0^\tau \frac{s^{N-1}}{C_{N-1}^{k_{n}-1}}f_{n}\left( s,v_{1}(s)\right)\,{\rm d}s \right)^{\frac{1}{k_{n}} }\,{\rm d}\tau\\
		\ge& \int_{\frac{1}{4}}^{\frac{3}{4}}\left( \frac{k_{n}}{\tau^{N-k_{n}}}\int_{\frac{1}{4}}^\tau \frac{s^{N-1}}{C_{N-1}^{k_{n}-1}}  E_{n}\,{\rm d}s
		\right)^{\frac{1}{k_{n}} }\,{\rm d}\tau\\
		=& \Gamma_{n} E_{n}^{\frac{1}{k_{n}} },
	\end{split}	
\end{equation*}
then
\begin{equation*}
	\min_{\frac{1}{4}\le t\le \frac{3}{4}} v_{n}(t) \ge \frac{1}{4}||v_{n}||\ge \frac{1}{4}v_{n}(\frac{1}{4}) \ge \frac{1}{4}\Gamma_{n} E_{n}^{\frac{1}{k_{n}} }.
\end{equation*}
Thus for any $t\in[\frac{1}{4},\frac{3}{4}]$, we have $\frac{1}{4}\Gamma_{n} E_{n}^{\frac{1}{k_{n}} }
\le v_{n}(t) \le  \tilde{G}_{n}^{\frac{1}{k_{n}} }$.
Repeating the above steps, we have $\frac{1}{4}\Gamma_{i} E_{i}^{\frac{1}{k_{i}} }
\le v_{i}(t) \le  \tilde{G}_{i}^{\frac{1}{k_{i}} }$, for any  $t\in[\frac{1}{4},\frac{3}{4}],\ (i=2,\ldots,n)$.
It follows from the assumption of $E_1$ that for any  $v_{1}\in K\cap \partial\Omega_{R_{0}}$,
\begin{equation*}
	\begin{split}
		Tv_{1}(\frac{1}{4})=&T_{1}T_{2}\cdots T_{n}(v_{1})(\frac{1}{4})\\
		=& \int_{\frac{1}{4}}^1\left( \frac{k_{1}}{\tau^{N-k_{1}}}\int_0^\tau \frac{s^{N-1}}{C_{N-1}^{k_{1}-1}}f_{1}\left( s,T_{2}(v_{3})(s)\right)\,{\rm d}s \right)^{\frac{1}{k_{1}} }\,{\rm d}\tau\\
		\ge& \int_{\frac{1}{4}}^{\frac{3}{4}}\left( \frac{k_{1}}{\tau^{N-k_{1}}}\int_0^\tau \frac{s^{N-1}}{C_{N-1}^{k_{1}-1}}  E_{1}\,{\rm d}s
		\right)^{\frac{1}{k_{1}} }\,{\rm d}\tau\\
		\ge& \Gamma_{1} E_{1}^{\frac{1}{k_{1}}}
		>R_{0},
	\end{split}
	\end{equation*}
which deduce that 
\begin{equation}\label{Th1.4ge}
	||Tv_{1}||>||v_{1}||, \quad v_{1}\in K\cap \partial \Omega_{R_{0}}.
\end{equation}

Moreover, since $\prod_{i=1}^{n}\alpha_{i}>\prod_{i=1}^{n}k_{i},\prod_{i=1}^{n}\beta_{i}<\prod_{i=1}^{n}k_{i}$, we know from Theorem \ref{Th1.1}  that there exist sufficient small constant $r_{4}\in (0,R_{0})$ and sufficient large constant $R_{2}>R_{0}$ such that 
\begin{equation}\label{Th1.4le}
	||Tv_{1}||\le ||v_{1}||,\quad v_{1}\in K\cap \partial\Omega_{R_{2}} \quad {\rm and} \quad 
	||Tv_{1}||\le ||v_{1}||,\quad v_{1}\in K\cap \partial\Omega_{r_{4}}.
\end{equation}

Note that for $v_{1}\in K\cap \partial \Omega_{R_{0}}$, the norm of $Tv_{1}$ is strictly greater than $||v_{1}||$, which shows that fixed point of $T$ can not exists on $K\cap\partial\Omega_{R_{0}}$. Thus, basing on \eqref{Th1.4ge} and \eqref{Th1.4le}, it follows from Theorem \ref{Lemma1.1} that there exist at least two fixed points of $T$ in $K\cap \left( \overline{\Omega}_{R_{0}}\setminus \Omega_{r_{4}}\right) $ and $K\cap \left( \overline{\Omega}_{R_{2}}\setminus \Omega_{R_{0}}\right) $ respectively.
\end{proof}

\vspace{3mm}


\section{Uniqueness and nonexistence}\label{Section 4}
In this section, we study the uniqueness and nonexistence results for a special case of the system \eqref{EQUATION1} where the nonlinearities are power functions with respect to $u$.
\subsection{Uniqueness}\label{Section 4.1}
In \cite{HW}, the authors gave a proof of uniqueness and approximation by iterations of the solution to a general Dirichlet problem of Monge-Amp\`{e}re equation. Here, we will use their method to prove Theorem \ref{Th1.3}. We first introduce the definition of $u_{0}$-sublinear operator and a corresponding existence result.

\begin{Definition}\label{Def4.1}
Let $P$ be a cone from a Banach space $Y$. With some $u_{0} \in P$ positive, $A:P \to P$ is called $u_{0}$-sublinear if
\begin{enumerate}[(i)]
	\item for any $x>0$, there exist positive constants $\theta_1$ and $\theta_2$ which depend on $x$, such that 
	$$\theta_1 u_0 \le Ax \le \theta_2 u_0;$$
	\item for any $\theta_1 u_0 \le x \le \theta_2 u_0$ and $0 < \xi < 1$, there always exists some $\eta > 0$ such that 
	$$A(\xi x) \ge (1 + \eta)\xi Ax.$$
\end{enumerate}
\end{Definition}

\begin{Lemma}\label{Lemma4.1}
An increasing and $u_0$-sublinear operator $A$ can have at most one positive fixed-point.	
\end{Lemma}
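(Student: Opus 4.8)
The plan is to argue by contradiction in the spirit of the classical $u_0$-concavity/sublinearity uniqueness arguments. Suppose $x^*$ and $y^*$ are two distinct positive fixed points of $A$. By property (i) of $u_0$-sublinearity applied to $x=y^*$, there are positive constants $\theta_1,\theta_2$ with $\theta_1 u_0 \le Ay^* = y^* \le \theta_2 u_0$, and similarly $\tilde\theta_1 u_0 \le x^* \le \tilde\theta_2 u_0$. Combining these two chains of inequalities, $x^*$ and $y^*$ are comparable: there is a constant $c\in(0,1]$ with $x^* \ge c\, y^*$ (and symmetrically $y^* \ge c'\, x^*$). Define
\[
\xi_0 := \sup\{\xi \in (0,1] : x^* \ge \xi\, y^*\},
\]
which is well defined and positive by the comparability just established, and satisfies $x^* \ge \xi_0 y^*$ (the defining inequality passes to the supremum since $P$ is closed). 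If $\xi_0 \ge 1$ then $x^* \ge y^*$, and running the same argument with the roles of $x^*$ and $y^*$ swapped gives $y^* \ge x^*$, hence $x^* = y^*$, a contradiction. So we may assume $0 < \xi_0 < 1$.

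Now I would exploit monotonicity and property (ii). Since $A$ is increasing and $x^* \ge \xi_0 y^*$, we get
\[
x^* = A x^* \ge A(\xi_0 y^*).
\]
To invoke (ii) we must check that $\xi_0 y^*$ lies in the order interval $[\theta_1 u_0, \theta_2 u_0]$ coming from $x = y^*$; since $\theta_1 u_0 \le y^*$ and $\xi_0 < 1$ one has $\xi_0 y^* \le y^* \le \theta_2 u_0$, while the lower bound $\xi_0 y^* \ge \xi_0\theta_1 u_0$ should be arranged by taking the constants in the order interval appropriately (here one uses that (ii) is stated for all $\theta_1 u_0 \le x \le \theta_2 u_0$, and a short check shows $\xi_0 y^*$ is of this form after possibly shrinking $\theta_1$). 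Then (ii) with $\xi = \xi_0 \in (0,1)$ yields an $\eta > 0$ with $A(\xi_0 y^*) \ge (1+\eta)\xi_0 A y^* = (1+\eta)\xi_0 y^*$. Chaining the inequalities gives
\[
x^* \ge (1+\eta)\xi_0\, y^*,
\]
which contradicts the maximality of $\xi_0$ because $(1+\eta)\xi_0 > \xi_0$ (and, after noting $(1+\eta)\xi_0$ can be taken $\le 1$ by replacing $\eta$ with a smaller value if necessary, it still belongs to the admissible set of scalars). This contradiction forces $\xi_0 \ge 1$, and hence $x^* = y^*$.

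The main obstacle I anticipate is bookkeeping the order-interval hypothesis in (ii): one must ensure that the element $\xi_0 y^*$ to which (ii) is applied genuinely satisfies $\theta_1 u_0 \le \xi_0 y^* \le \theta_2 u_0$ for an admissible pair $(\theta_1,\theta_2)$, rather than just being comparable to $u_0$ in some looser sense. This is the place where the precise quantifier structure of Definition \ref{Def4.1} matters, and where a careless argument could slip. Everything else — the closedness of the cone used to pass the inequality $x^* \ge \xi_0 y^*$ to the supremum, and the symmetry argument handling the case $\xi_0 \ge 1$ — is routine. Note that existence of a positive fixed point is not asserted here; Lemma \ref{Lemma4.1} only claims uniqueness, so no fixed-point theorem is needed in this proof.
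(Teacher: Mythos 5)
Your argument is correct and is precisely the classical Krasnoselskii-type uniqueness proof for increasing $u_0$-sublinear operators (take two fixed points, form $\xi_0=\sup\{\xi: x^*\succeq \xi y^*\}$, and use monotonicity plus property (ii) to push $\xi_0$ past itself); the paper omits the proof and cites \cite{HW}, where essentially this same argument appears. One small remark: the hypothesis in Definition \ref{Def4.1}(ii) is imposed on $x$ (here $x=y^*$), not on $\xi x$, so the order-interval check for $\xi_0 y^*$ that you worry about is not actually needed -- applying (ii) with $x=y^*$ and $\xi=\xi_0$ is already legitimate.
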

The proof can be found in \cite{HW}, we omit it here.

\begin{proof}[Proof of Theorem \ref{Th1.3}]
Let $X:=C[0,1]$ and cone $P:=\{v\in X:v(t)\ge 0, t\in [0,1]\}$. 
It is easy to see that $K\subset P$, where $K$ is defined in \eqref{cone}. 
We define $T_{i}\ (i=1,\ldots,n)$ and composite operator $T=T_{1}T_{2}\cdots T_{n}$ as in Section \ref{Section 2}. 
The existence of nontrivial radial convex solutions to system \eqref{EQUATION2} is obtained in Theorem \ref{Th1.1} and therefore  investigate $T$ has at most one fixed-point in $K$ is enough.
By Lemma \ref{Lemma4.1}, it suffices to verify that $T:K\to K$ is an increasing and $u_0$-sublinear for some $u_0$ positive in $C[0,1]$. 
By the definitions of $T_{i} $, it is clear that each $T_i\ (i=1,\ldots,n)$ is a increasing operator, so is the composite operator $T$, then we just need to prove that $T$ satisfies the Definition \ref{Def4.1}.

Firstly, we show that $T$ satisfies the Definition \ref{Def4.1} (i).
\begin{equation*}
	\begin{split}
	 Tv_1(t)=&\int_{t}^{1}\left(\frac{k_1}{\tau^{N-k_1}}\int_{0}^{\tau}\frac{s^{N-1}}{C_{N-1}^{k_{1}-1}}v_{2}^{\gamma_1}(s)\,{\rm d}s \right)^{\frac{1}{k_{1}}} \,{\rm d}\tau\\
	 \le&||v_{2}||^{\frac{\gamma_1}{k_{1}}}
	 \int_{t}^{1}\left(\frac{k_1}{\tau^{N-k_1}}\int_{0}^{\tau}\frac{s^{N-1}}{C_{N-1}^{k_{1}-1}}\,{\rm d}s \right)^{\frac{1}{k_{1}}} \,{\rm d}\tau \\
	 \le&||T_2(v_3)||^{\frac{\gamma_1}{k_{1}}} \left(\frac{k_1}{NC_{N-1}^{k_1-1}} \right)^{\frac{1}{k_1}}\int_{t}^{1}\tau\,{\rm d}\tau\\
	 \le&\left[ 
	 \int_{0}^{1}\left(\frac{k_2}{\tau^{N-k_2}}\int_{0}^{\tau}\frac{s^{N-1}}{C_{N-1}^{k_{2}-1}}v_{3}^{\gamma_2}(s)\,{\rm d}s \right)^{\frac{1}{k_{2}}} \,{\rm d}\tau 
	 \right] ^{\frac{\gamma_1}{k_{1}}}(1-t)\\
	 \le& ||v_3||^{\frac{\gamma_1\gamma_2}{k_1k_2}} (1-t)\\
	 &\qquad\vdots\\
	 \le& ||v_1||^{\frac{\prod_{i=1}^{n}\gamma_i}{\prod_{i=1}^{n} k_i}}(1-t).
	\end{split}
\end{equation*}
Here, let $u_0=1-t$, $t\in[0,1)$ and $\theta_2=||v_1||^{\frac{\prod_{i=1}^{n}\gamma_i}{\prod_{i=1}^{n} k_i}}$, then $Tv_1(t)\le\theta_2u_0$.
Set $$\Gamma:=\left( \frac{9}{32}\right) ^{\frac{\gamma_1}{k_1}+\cdots+\frac{\prod_{i=1}^{n-1} \gamma_i}{\prod_{i=1}^{n-1} k_i}}
\left( \frac{k_1}{4^{\gamma_1}NC_{N-1}^{k_1-1}}\right) ^{\frac{1}{k_1}} 
\cdots
\left( \frac{k_n}{4^{\gamma_n}NC_{N-1}^{k_n-1}}\right) ^{\frac{\prod_{i=1}^{n-1} \gamma_i}{\prod_{i=1}^{n} k_i}}
||v_1||^{\frac{\prod_{i=1}^{n}\gamma_i}{\prod_{i=1}^{n} k_i}}$$
be a positive constant which depends only on $||v_1||$.
Next, let $c\in(\frac{1}{4},\frac{3}{4})$ be a fixed number. Notice that  $Tv_1(t)$ is decreasing with $t$, we have for $t\in[0,c)$,
\begin{equation*}
	\begin{split}
   	   Tv_1(t)\ge& Tv_1(c)=\int_{c}^{1}\left(
   	   \frac{k_1}{\tau^{N-k_1}}\int_{0}^{\tau}\frac{s^{N-1}}{C_{N-1}^{k_{1}-1}}v_{2}^{\gamma_1}(s)\,{\rm d}s 
   	   \right)^{\frac{1}{k_{1}}} \,{\rm d}\tau\\
   	   \ge&\int_{c}^{\frac{3}{4}}\left(\frac{k_1}{\tau^{N-k_1}}\int_{0}^{\tau}\frac{s^{N-1}}{C_{N-1}^{k_{1}-1}}(\frac{1}{4}||v_{2}||)^{\gamma_1}\,{\rm d}s \right)^{\frac{1}{k_{1}}} \,{\rm d}\tau\\
   	   \ge&\left( \frac{k_1}{4^{\gamma_1}NC_{N-1}^{k_1-1}}\right) ^{\frac{1}{k_1}} 
   	   \left( \frac{9}{32}-\frac{1}{2}c^2\right) 
   	   ||T_2(v_3)||^{\frac{\gamma_1}{k_1}}\\
   	    &\qquad\qquad\vdots\\
   	   \ge&\Gamma
   	   \left( \frac{9}{32}-\frac{1}{2}c^2\right)\\
   	   \ge&\Gamma
   	   \left( \frac{9}{32}-\frac{1}{2}c^2\right) 
   	   (1-t),
	\end{split}
\end{equation*}
where we use the fact $\min_{0\le t\le \frac{3}{4}}v(t)\ge \frac{1}{4}||v||$ in the above inequality which follows from \eqref{cone} combining with the concavity of $v_i\ (i=1,\ldots,n)$ and $v_i^{\prime}(0)=0$.
For $t\in[c,1)$, we let 
$$\zeta(\tau):=\left(\frac{1}{\tau^{N-k_1}}\int_{0}^{\tau}s^{N-1}(1-s) ^{\gamma_1}\,{\rm d}s \right)^{\frac{1}{k_{1}}},\ \tau\in [c,1].$$ 
Notice that $\zeta(\tau)\in C[c,1]$ and $\zeta(\tau)>0,\ \tau\in [c,1]$ is well-defined, then $\zeta([c,1])$ is the image of a compact set and so is compact which shows that it is both closed and bounded. So $\zeta([c,1])$ has a positive absolute minimum. Besides, by the concavity of $v_i(t)\ (i=1,\ldots,n)$ and $v_i^{\prime}(0)=v_i(1)=0$, we have 
\begin{equation}\label{Equation4.2}
	v_i(t)\ge v_i(0)(1-t), \quad \forall t\in [0,1].
\end{equation}
Then we have for $t\in[c,1)$,
\begin{equation*}
	\begin{split}
		Tv_1(t)=&\int_{t}^{1}\left(
		\frac{k_1}{\tau^{N-k_1}}\int_{0}^{\tau}\frac{s^{N-1}}{C_{N-1}^{k_{1}-1}}v_{2}^{\gamma_1}(s)\,{\rm d}s 
		\right)^{\frac{1}{k_{1}}} \,{\rm d}\tau\\
		\ge&
		\int_{t}^{1}\left(\frac{k_1}{\tau^{N-k_1}}\int_{0}^{\tau}\frac{s^{N-1}}{C_{N-1}^{k_{1}-1}}[v_2(0)(1-s)] ^{\gamma_1}\,{\rm d}s \right)^{\frac{1}{k_{1}}} \,{\rm d}\tau\\
		=&||v_2||^{\frac{\gamma_1}{k_1}}
		\left( \frac{k_1}{C_{N-1}^{k_{1}-1}}\right)^{\frac{1}{k_1}}
		\int_{t}^{1}\left(\frac{1}{\tau^{N-k_1}}\int_{0}^{\tau}s^{N-1}(1-s) ^{\gamma_1}\,{\rm d}s \right)^{\frac{1}{k_{1}}} \,{\rm d}\tau\\
		\ge&
		||T_2(v_3)||^{\frac{\gamma_1}{k_1}}
		\left( \frac{k_1}{C_{N-1}^{k_{1}-1}}\right)^{\frac{1}{k_1}}
	    \min_{\tau \in [c,1]}\zeta(\tau)
		\int_{t}^{1}\,{\rm d}\tau\\
		=&\left[ \int_{0}^{1}\left(\frac{k_2}{\tau^{N-k_2}}\int_{0}^{\tau}\frac{s^{N-1}}{C_{N-1}^{k_{2}-1}}v_{3}^{\gamma_2}(s)\,{\rm d}s \right)^{\frac{1}{k_{2}}} \,{\rm d}\tau 
		\right] ^{\frac{\gamma_1}{k_1}}
		\left( \frac{k_1}{C_{N-1}^{k_{1}-1}}\right)^{\frac{1}{k_1}}
		\min_{\tau \in [c,1]}\zeta(\tau)
		(1-t)\\
		\ge&
		\left[ \int_{0}^{\frac{3}{4}}\left(\frac{k_2}{\tau^{N-k_2}}\int_{0}^{\tau}\frac{s^{N-1}}{C_{N-1}^{k_{2}-1}}(\frac{1}{4}||v_3||)^{\gamma_2}\,{\rm d}s \right)^{\frac{1}{k_{2}}} \,{\rm d}\tau 
		\right] ^{\frac{\gamma_1}{k_1}}\left( \frac{k_1}{C_{N-1}^{k_{1}-1}}\right)^{\frac{1}{k_1}}
		\min_{\tau \in [c,1]}\zeta(\tau)
		(1-t)\\
		=&
		||T_3(v_4)||^{\frac{\gamma_1\gamma_2}{k_1k_2}}
		\left( \frac{k_1}{C_{N-1}^{k_{1}-1}}\right)^{\frac{1}{k_1}}
		\left( \frac{k_2}{4^{\gamma_2}C_{N-1}^{k_{2}-1}}\right)^{\frac{\gamma_1}{k_1k_2}}
		\left( \frac{9}{32}\right) ^{\frac{\gamma_1}{k_1}}		
		\min_{\tau \in [c,1]}\zeta(\tau)
		(1-t)\\
		&\qquad\qquad\vdots\\
		\ge&\Gamma 4^{\frac{\gamma_1}{k_1}}	
		\min_{\tau \in [c,1]}\zeta(\tau)
		(1-t).
		\end{split}
\end{equation*}
Let $\theta_1={\rm min}\left\lbrace 
\Gamma
\left( \frac{9}{32}-\frac{1}{2}c^2\right),
\Gamma
4^{\frac{\gamma_1}{k_1}}	
\min_{\tau \in [c,1]}\zeta(\tau)
\right\rbrace $, then we have $Tv_1(t)\ge\theta_1u_0$. 

To verify the Definition \ref{Def4.1} (ii), we have 
for any $\theta_1 u_0 \le v_1 \le \theta_2 u_0$ and $\xi\in (0,1)$, $T_1(\xi v_2)=\xi^{\frac{\gamma_1}{k_1}}T_1(v_2),\ T_2(\xi v_3)=\xi^{\frac{\gamma_2}{k_2}}T_2(v_3),\ \ldots,\ T_n(\xi v_1)=\xi^{\frac{\gamma_n}{k_n}}T_n(v_1)$. Notice that $\prod_{i=1}^{n}\gamma_i<\prod_{i=1}^{n}k_i$, then there exists $\eta>0$ such that
\begin{equation*}
	T(\xi v_1)=T_1T_2\cdots T_n(\xi v_1)=T_1T_2\cdots T_{n-1}(\xi^{\frac{\gamma_n}{k_n}}T_n(v_1))=\cdots=\xi^{\frac{\prod_{i=1}^{n}\gamma_i}{\prod_{i=1}^{n}k_i}}Tv_1 \ge(1+\eta)\xi Tv_1.
\end{equation*}

Thus $T$ is a $u_0$-sublinear operator and  $T$ has at most one fixed-point in $K$ by Lemma \ref{Lemma4.1} which shows that the system \eqref{EQUATION2} has a unique nontrivial radial convex solution.
\end{proof}

\begin{remark}\label{Rm4.1}
	Note that we also use the convexity of $u_i=-v_i\ (i=1,\ldots,n)$ in this subsection, namely, the inequality \eqref{Equation4.2}.
\end{remark}

\subsection{Nonexistence}\label{Section 4.2}
In the case of $\prod_{i=1}^{n}\gamma_i=\prod_{i=1}^{n}k_i$, we can get nonexistence result by contradiction.
\begin{proof}[Proof of Theorem \ref{Th1.4}]
Suppose, to the contrary, that $v_0$ is a fixed-point of $T$ in $K$, then  $Tv_0=v_0$. It follows immediately from the definition of $T$ that $v_0$ is a concave function satisfying $v_0(1)=0$ and $v_0(t)>0,t\in [0,1)$.

On the other hand, for any  $v_1\in K$, we have 
\begin{equation*}
	\begin{split}
		||T(v_1)||=&\int_{0}^{1}\left( \frac{k_1}{\tau^{N-k_1}} \int_{0}^{\tau} \frac{s^{N-1}}{C_{N-1}^{k_1-1}}v_2^{\gamma_1}(s)\,{\rm d}s\right)^{\frac{1}{k_1}}\,{\rm d}\tau  \\
		\le&||v_2||^{\frac{\gamma_1}{k_1}}\int_{0}^{1}\left( \frac{k_1}{\tau^{N-k_1}} \int_{0}^{\tau} \frac{s^{N-1}}{C_{N-1}^{k_1-1}}\,{\rm d}s\right)^{\frac{1}{k_1}}\,{\rm d}\tau   \\	
		=& \frac{1}{2}\left( \frac{k_1}{NC_{N-1}^{k_1-1}}\right)^{\frac{1}{k_1}} ||v_2||^{\frac{\gamma_1}{k_1}}\\
		<&||v_2||^{\frac{\gamma_1}{k_1}}\\
	   	 &\qquad\vdots\\
	   	<&||v_1||^{\frac{\prod_{i=1}^{n}\gamma_i}{\prod_{i=1}^{n}k_i}}=||v_1||.
	\end{split}
\end{equation*}
Here, we also use the fact $\frac{1}{2}\left( \frac{k_1}{NC_{N-1}^{k_1-1}}\right)^{\frac{1}{k_1}}<1$ for the same reason in \eqref{Lemma3.2}. Thus if we take $v_1=v_0$ in the above estimate, we have $||v_0||$ is strictly larger than $||Tv_0||$. This contradicts $Tv_0=v_0$
and concludes the proof.
\end{proof}

\begin{remark}
		Due to the fact $\frac{1}{2}\left( \frac{k_1}{NC_{N-1}^{k_1-1}}\right)^{\frac{1}{k_1}}<1$, we have a direct proof of the nonexistence theorem by reduction to absurdity without using the fixed-point theorem in Theorem \ref{Lemma1.1}. Therefore, we can obtain the nonexistence for $\boldsymbol{k}$-admissible solutions of system \eqref{EQUATION2} in the assumption of $\prod_{i=1}^{n}\gamma_i=\prod_{i=1}^{n}k_i$, (not just for the convex solutions of system \eqref{EQUATION2}).
\end{remark}

\vspace{3mm}

\section{Eigenvalue problem}\label{Section 5}
In the previous section, we proved the nonexistence of nontrivial radial convex solution to the power-type system \eqref{EQUATION2} in a unit ball. Then by imposing a suitable condition on positive parameters of eigenvalue problem \eqref{EQUATION3}, we can also get the existence of $\boldsymbol{k}$-admissible solution in a general strictly  $(k-1)$-convex domain. In this section, our main tool is the generalized Krein-Rutman theorem in \cite{Jacobsen}. 

We first recall some basic concepts:

Let $E$ be a Banach space, $M\subset E$ be a cone.
\begin{Definition}\label{partial order}
The cone $M$ introduces a partial order in $E$ by the relation
   \begin{center}
			$u\prec v$ \qquad if and only if\qquad $u-v\in M$.
   \end{center}
\end{Definition}

\begin{Definition}
	Define an operator $A: E \to E$. Then	
\begin{enumerate}[(i)]
	\item $A$ is called positive if $A(M) \subset M$;
	\item $A$ is said to be homogeneous if it is positively homogeneous with degree 1;
	\item $A$ is monotone if it satisfies $x \prec y \Rightarrow A(x) \prec A(y)$;
	\item $A$ is called strong (relative to $M$), if for all $u,v \in Im(A) \cap M \setminus \{ \theta\} $, there
	exist positive constants $\delta$ and $\gamma$ that depend on $u$ and $v$ such that
	$u -\delta v \in M$, $v -\gamma u \in M$.
\end{enumerate}
\end{Definition}

The following is the generalized Krein-Rutman theorem developed by Jacobsen in \cite{Jacobsen}. 
\begin{Lemma}\label{Lemma5.1}
	Let $E$ contain a cone $M$, $A: E \to E$ be a completely continuous operator with $A|_{M}: M \to M$ homogeneous, monotone, and strong. Furthermore, assume that there exists a nonzero element $\omega$,  $ A(\omega)\in Im(A)\cap M$. Then there exists a constant $\lambda_0 > 0$ with the following properties:
	\begin{enumerate}[(i)]
	\item There exists $u \in M \setminus \{ \theta\}$, with $u = \lambda_0 A(u)$;
	\item If $v \in M \setminus \{ \theta\}$ and $\lambda > 0$ such that $v = \lambda A(v)$, then $\lambda = \lambda_0$.
\end{enumerate}
\end{Lemma}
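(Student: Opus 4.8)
The plan is to separate the two assertions: part (i) (existence of a positive eigenpair) is obtained by a regularization-plus-compactness argument, while part (ii) (uniqueness of the eigenvalue) follows from a Collatz--Wielandt comparison that exploits precisely the three structural hypotheses on $A|_M$. Throughout I fix the reference element $e:=A(\omega)\in \mathrm{Im}(A)\cap M\setminus\{\theta\}$ furnished by the hypothesis, I write $x\succeq y$ to mean $x-y\in M$, and I use the standing assumptions of Jacobsen's framework that $M$ is closed, pointed ($M\cap(-M)=\{\theta\}$) and normal with constant $N$.

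For existence, first I would solve a regularized eigenvalue problem. Let $Q:=\{u\in M:\|u\|\le 1\}$, which is closed, bounded and convex, and for $\epsilon\in(0,1]$ define
\[
   \Phi_\epsilon(u)=\frac{A(u)+\epsilon e}{\|A(u)+\epsilon e\|},\qquad u\in Q .
\]
Since $A(u)+\epsilon e\succeq \epsilon e$, normality gives $\|A(u)+\epsilon e\|\ge \epsilon\|e\|/N>0$, so $\Phi_\epsilon$ is well defined, continuous and---because $A$ is completely continuous---compact, and it maps $Q$ into the unit sphere $\Sigma:=\{u\in M:\|u\|=1\}\subset Q$. Schauder's theorem then yields a fixed point $u_\epsilon\in Q$; as $\Phi_\epsilon(Q)\subset\Sigma$ we automatically get $\|u_\epsilon\|=1$ and, writing $\mu_\epsilon:=\|A(u_\epsilon)+\epsilon e\|$,
\[
   A(u_\epsilon)+\epsilon e=\mu_\epsilon u_\epsilon .
\]

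Next I would establish uniform bounds and pass to the limit. The upper bound $\mu_\epsilon\le \sup_{\|u\|=1}\|A(u)\|+\|e\|=:C<\infty$ is immediate from boundedness of $A$ on bounded sets. The lower bound is the heart of the matter: using that $A$ is strong I get $A(e)-\gamma_0 e\in M$ for some $\gamma_0>0$ (comparability of $A(e)$ and $e$; here one checks $A(e)\ne\theta$, the only degenerate alternative being $A\circ A\equiv\theta$ on $M$, which carries no positive eigendirection and is excluded by the hypotheses). From $\mu_\epsilon u_\epsilon\succeq \epsilon e$ I bootstrap: if $u_\epsilon\succeq\beta e$, then monotonicity and homogeneity give $A(u_\epsilon)\succeq\beta A(e)\succeq\beta\gamma_0 e$, hence $u_\epsilon\succeq \mu_\epsilon^{-1}(\beta\gamma_0+\epsilon)e$. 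Were $\mu_\epsilon\le\gamma_0$, the resulting scalars would increase to $+\infty$, while $\beta e\preceq u_\epsilon$ forces $\beta\|e\|\le N\|u_\epsilon\|=N$ by normality---a contradiction. Thus $\mu_\epsilon>\gamma_0$ uniformly. With $\gamma_0\le\mu_\epsilon\le C$, compactness of $A$ lets me extract $\epsilon_j\to0$ with $A(u_{\epsilon_j})\to w$ and $\mu_{\epsilon_j}\to\mu_0\in[\gamma_0,C]$; then $u_{\epsilon_j}=\mu_{\epsilon_j}^{-1}\bigl(A(u_{\epsilon_j})+\epsilon_j e\bigr)\to u_0:=w/\mu_0$, so $u_0\in M$ by closedness, $\|u_0\|=1$, and continuity of $A$ gives $A(u_0)=\mu_0u_0$. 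Setting $\lambda_0:=1/\mu_0>0$ yields $u_0=\lambda_0 A(u_0)$, which is assertion (i).

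For the uniqueness in (ii), suppose $u=\lambda_0 A(u)$ and $v=\lambda A(v)$ with $u,v\in M\setminus\{\theta\}$, i.e. $A(u)=\mu u$ and $A(v)=\nu v$ with $\mu=1/\lambda_0$, $\nu=1/\lambda$. Since $u=\lambda_0A(u)$ and $v=\lambda A(v)$ both lie in $\mathrm{Im}(A)\cap M\setminus\{\theta\}$, the strong property makes them comparable, so with $s_*:=\sup\{s\ge0:u-sv\in M\}$ pointedness and comparability give $0<s_*<\infty$ and, by closedness of $M$, $u-s_*v\in M$. Applying the monotone, homogeneous operator $A$ gives $\mu u=A(u)\succeq A(s_*v)=s_*A(v)=s_*\nu v$, hence $u\succeq(s_*\nu/\mu)v$; maximality of $s_*$ forces $s_*\nu/\mu\le s_*$, i.e. $\nu\le\mu$. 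Exchanging the roles of $u$ and $v$ gives $\mu\le\nu$, so $\mu=\nu$ and $\lambda=\lambda_0$. I expect the main obstacle to be the uniform lower bound $\mu_\epsilon\ge\gamma_0$ in the existence step: the $\epsilon e$ perturbation alone only produces a bound that degenerates as $\epsilon\to0$, so the strong (comparability) hypothesis together with normality of the cone must be brought in through the bootstrap above to keep the eigenvalue away from $0$ in the limit.
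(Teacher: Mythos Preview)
The paper does not supply its own proof of this lemma: it is stated verbatim as ``the generalized Krein--Rutman theorem developed by Jacobsen in \cite{Jacobsen}'' and then used as a black box in the proof of Theorem~\ref{Th1.5}. So there is no in-paper argument to compare against.

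Your proposal is, in outline, the standard route for nonlinear Krein--Rutman theorems and is essentially the argument one finds in Jacobsen's paper: a Schauder fixed point for the normalized perturbed map $u\mapsto (A(u)+\epsilon e)/\|A(u)+\epsilon e\|$ produces approximate eigenvectors, compactness of $A$ and a uniform lower bound on $\mu_\epsilon$ let you pass to the limit, and the Collatz--Wielandt comparison using monotonicity, homogeneity and comparability (``strong'') gives uniqueness of the eigenvalue. The normality-plus-bootstrap argument you sketch for the lower bound $\mu_\epsilon>\gamma_0$ is exactly the right mechanism.

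One point deserves more care than your parenthetical gives it. You need $A(e)\ne\theta$ to invoke the strong property and extract $\gamma_0>0$ with $A(e)\succeq\gamma_0 e$, but you justify this by claiming the alternative is ``$A\circ A\equiv\theta$ on $M$''. That implication is not valid: $A(e)=0$ for the single element $e=A(\omega)$ does not force $A^2\equiv 0$. What is true (and what Jacobsen's hypotheses are meant to secure) is that the assumption on $\omega$ should be read as giving a nonzero $e=A(\omega)\in M\setminus\{\theta\}$; one then observes that if $A(e)=\theta$, any putative eigenvector $u=\lambda_0A(u)\in\mathrm{Im}(A)\cap M\setminus\{\theta\}$ would be comparable to $e$ by strongness, say $e\succeq\delta u$, whence $0=A(e)\succeq\delta A(u)=\delta u/\lambda_0$, forcing $u=\theta$. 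So either $A(e)\ne\theta$ and your argument runs, or there is no positive eigenpair at all; in the latter case the lemma as stated would be vacuous, and indeed Jacobsen's formulation carries a slightly stronger nondegeneracy hypothesis on $A$ than the paper's abbreviated statement records. With that clarification your proof is sound.
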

For the convenience of the reader, we also present the existence theorems in \cite{T,Wang-I}.
\begin{Lemma}\label{Lemma5.2}(see \cite{T})
Let $\Omega$ be a uniformly $(k-1)$-convex domain in $\mathbb{R}^{N}$, $k=2,\ldots,N$, $\varphi \in C^{0}(\overline{\Omega}) $ and $\psi \ge 0, \in L^{p}(\Omega)$, for $p>\frac{N}{2k}$. Then there exists a unique admissible weak solution $u\in C^{0}(\overline{\Omega})$ to the  problem
\begin{equation*}
	\left\{
\begin{aligned}
	&S_{k}\left(D^{2}u \right) =\psi,  &{\rm in}\ \Omega,\\
	&u=\varphi,  &{\rm on}\ \partial \Omega.
\end{aligned}
\right.
\end{equation*}	
\end{Lemma}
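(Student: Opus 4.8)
The plan is to follow Trudinger's potential-theoretic approach: realize the weak solution as a uniform limit of classical admissible solutions to regularized problems, use the Hessian measure of Trudinger--Wang both to define weak solutions and to pass to the limit in the equation, and settle uniqueness by the comparison principle for admissible functions. Recall that to every upper-semicontinuous $k$-convex function $w$ there is associated a well-defined nonnegative Borel measure $\mu_k[w]$, the $k$-Hessian measure, which coincides with $S_k(D^2w)\,dx$ whenever $w\in C^2$; accordingly, a continuous $k$-admissible $u$ with $u=\varphi$ on $\partial\Omega$ is by definition a weak solution of the Dirichlet problem when $\mu_k[u]=\psi\,dx$. The whole argument then reduces to constructing such a $u$ and showing it is unique.

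First I would regularize the data: choose $\varphi_j\in C^\infty(\overline\Omega)$ with $\varphi_j\to\varphi$ uniformly, and $\psi_j\in C^\infty(\overline\Omega)$ with $\psi_j\ge\varepsilon_j>0$ and $\psi_j\to\psi$ in $L^p(\Omega)$ (mollify after adding a vanishing positive constant). Since $\Omega$ is uniformly $(k-1)$-convex and $\psi_j$ is smooth and strictly positive, the classical existence theory of Caffarelli--Nirenberg--Spruck (and Trudinger) provides a unique admissible solution $u_j\in C^\infty(\Omega)\cap C^0(\overline\Omega)$ of $S_k(D^2u_j)=\psi_j$ in $\Omega$ with $u_j=\varphi_j$ on $\partial\Omega$; the uniform $(k-1)$-convexity of $\partial\Omega$ is precisely the geometric hypothesis under which the boundary second-order estimates, hence classical solvability, are available.

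Next come the a priori estimates, uniform in $j$. Because each $u_j$ is $k$-admissible with $k\ge 2$, it is subharmonic ($\Delta u_j=S_1(\lambda)\ge 0$), so the maximum principle gives $u_j\le h_j$ with $h_j$ harmonic with boundary data $\varphi_j$, a uniform upper bound. For the lower bound and, crucially, for equicontinuity, I would invoke Trudinger's global $L^p$ estimate: $\sup_\Omega(-u_j)\le C\big(\|\varphi_j\|_{C^0}+\|\psi_j\|_{L^p}^{1/k}\big)$ together with a uniform modulus of continuity on $\overline\Omega$, both valid exactly because $p>\tfrac{N}{2k}$. The interior modulus of continuity comes from Wolff-potential and capacity estimates for the Hessian measure, while the boundary modulus is obtained from upper and lower barriers built out of the uniform $(k-1)$-convexity of $\partial\Omega$. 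This global equicontinuity is the technical heart of the proof and the step I expect to be the main obstacle, since it encodes the sharp role of the threshold $p>\tfrac{N}{2k}$.

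With uniform bounds and a uniform modulus of continuity, Arzel\`a--Ascoli yields a subsequence $u_j\to u$ uniformly on $\overline\Omega$; then $u\in C^0(\overline\Omega)$ is $k$-admissible (the $k$-convex class is closed under uniform limits) with $u=\varphi$ on $\partial\Omega$. To recover the equation I would use the weak continuity of the Hessian measure: from $u_j\to u$ uniformly one gets $\mu_k[u_j]\rightharpoonup\mu_k[u]$ as Radon measures, while $\mu_k[u_j]=\psi_j\,dx\rightharpoonup\psi\,dx$ since $\psi_j\to\psi$ in $L^1_{\mathrm{loc}}$; hence $\mu_k[u]=\psi\,dx$ and $u$ is a weak solution. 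Finally, uniqueness follows from the comparison principle for admissible weak solutions: if $u,v\in C^0(\overline\Omega)$ are $k$-admissible with $\mu_k[u]=\mu_k[v]=\psi\,dx$ and $u=v=\varphi$ on $\partial\Omega$, then $\mu_k[u]\ge\mu_k[v]$ together with $u\le v$ on $\partial\Omega$ forces $u\le v$ in $\Omega$, and by symmetry $u=v$. These steps together establish existence and uniqueness of the admissible weak solution.
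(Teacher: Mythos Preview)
The paper does not supply its own proof of this lemma; it is quoted verbatim from Trudinger's article \cite{T} purely for the reader's convenience, alongside Lemma~\ref{Lemma5.4} from \cite{Wang-I}. There is therefore no in-paper argument to compare against.

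That said, your outline is a faithful sketch of Trudinger's original strategy: regularize the data, solve the nondegenerate problems classically via Caffarelli--Nirenberg--Spruck, obtain uniform $C^0$ bounds and a uniform modulus of continuity from the $L^p$ potential/Wolff estimates (this is exactly where the threshold $p>\tfrac{N}{2k}$ enters), extract a uniform limit by Arzel\`a--Ascoli, identify the limiting equation through the weak continuity of the $k$-Hessian measure, and conclude uniqueness by the comparison principle for admissible weak solutions. You have correctly flagged the global modulus-of-continuity estimate as the technical core; nothing in your plan is wrong, and it matches the argument in the cited reference.
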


\begin{Lemma}\label{Lemma5.4} (see \cite{Wang-I})
Assume that $\Omega$ is $(k-1)$-convex, $\varphi$, $ \Omega \in C^{3,1}$, $f\in C^{1,1}(\overline{\Omega})$, and $f\ge f_0>0$. Then there is a unique $k$-admissible solution $u \in C^{3,\alpha}(\overline{\Omega})$ to the Dirichlet problem
\begin{equation*}
	\left\{
	\begin{aligned}
		&S_{k}\left(D^{2}u \right) =f(x),  &{\rm in}\ \Omega,\\
		&u=\varphi,  &{\rm on}\ \partial \Omega.
	\end{aligned}
	\right.
\end{equation*}	
\end{Lemma}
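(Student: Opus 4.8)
The plan is to prove uniqueness by a linearization and maximum principle argument, and existence by the method of continuity, following the scheme of Caffarelli--Nirenberg--Spruck. For uniqueness, suppose $u,v$ are two $k$-admissible solutions. Each satisfies $S_k(D^2u)=f\ge f_0>0$ with $\lambda(D^2u)\in\overline{\sigma}_k$, so the Newton--Maclaurin inequalities force $\lambda(D^2u(x)),\lambda(D^2v(x))\in\sigma_k$ pointwise; that is, both Hessians lie in the open G\aa rding cone, on which $S_k^{ij}:=\partial S_k/\partial(D^2u)_{ij}$ is positive definite. Since this cone is convex, the segment $D^2v_s:=(1-s)D^2v+sD^2u$ remains in it for all $s\in[0,1]$, and writing $w:=u-v$ gives
$$0=S_k(D^2u)-S_k(D^2v)=\Bigl(\int_0^1 S_k^{ij}(D^2v_s)\,{\rm d}s\Bigr)D_{ij}w=:a^{ij}D_{ij}w,$$
where $a^{ij}$ is positive definite. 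As $w=0$ on $\partial\Omega$, the maximum principle yields $w\equiv0$.

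For existence, I would use the $(k-1)$-convexity of $\partial\Omega$ to build a uniformly $k$-convex function $\underline{u}$ with $\underline{u}=\varphi$ on $\partial\Omega$, and then deform through the family $S_k(D^2u_t)=(1-t)S_k(D^2\underline{u})+t f$, $t\in[0,1]$, whose right-hand side stays bounded below by a positive constant and which has the explicit solution $u_0=\underline{u}$ at $t=0$ and the given problem at $t=1$. The set of $t$ admitting a $k$-admissible solution in $C^{3,\alpha}(\overline{\Omega})$ is then nonempty; it is open by the implicit function theorem in H\"older spaces, since the linearization $L:=S_k^{ij}(D^2u_t)D_{ij}$ is uniformly elliptic with $C^{1,\alpha}$ coefficients and invertible on functions vanishing on $\partial\Omega$ by Schauder theory; and closedness reduces to a priori $C^{3,\alpha}$ bounds uniform in $t$.

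These estimates follow the standard hierarchy. The $C^0$ bound comes from comparison with explicit sub- and supersolutions using the positive lower bound on the right-hand side; the $C^1$ bound combines the subharmonicity of admissible functions with boundary barriers. The decisive step is the $C^2$ estimate: interior bounds on $D^2u$ exploit the concavity of $S_k^{1/k}$ on $\sigma_k$ via a maximum principle applied to a suitable function of the largest eigenvalue of $D^2u$, whereas the boundary second-derivative bounds, and in particular the double normal derivative, are the genuine obstacle and are exactly where the $(k-1)$-convexity of $\partial\Omega$ must be used. Once $\|u_t\|_{C^2(\overline{\Omega})}\le C$ is in hand, $S_k^{1/k}$ is uniformly elliptic and concave on the relevant range of Hessians, so the Evans--Krylov theorem upgrades this to a uniform $C^{2,\alpha}$ bound, and differentiating the equation together with Schauder estimates (using $f\in C^{1,1}$ and $\partial\Omega\in C^{3,1}$) bootstraps to the claimed $C^{3,\alpha}$ regularity. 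Hence the solvable set is all of $[0,1]$, so $t=1$ gives a $k$-admissible solution; combined with the uniqueness above, this completes the proof.
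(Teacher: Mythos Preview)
The paper does not prove this lemma at all: it is quoted verbatim from \cite{Wang-I} (X.-J.~Wang's survey on the $k$-Hessian equation), so there is no ``paper's own proof'' to compare against. Your sketch is nonetheless the correct one and is exactly the Caffarelli--Nirenberg--Spruck scheme underlying the result as presented in \cite{Wang-I} and \cite{CNS}: uniqueness by linearization along the segment joining two solutions and the maximum principle, existence by the method of continuity, openness from the implicit function theorem via Schauder theory for the linearized operator, and closedness from the standard hierarchy of a priori estimates culminating in the boundary $C^2$ bound (where $(k-1)$-convexity enters) followed by Evans--Krylov and a Schauder bootstrap to $C^{3,\alpha}$. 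Nothing in your outline is wrong or out of order; it simply reproduces the classical argument that the paper is citing rather than reproving.
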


\begin{proof}[Proof of Theorem \ref{Th1.5}]
	Let $X$ be a Banach space $C(\overline{\Omega})$ equipped with the supremum norm. Define a cone $P:=\left\lbrace u\in X:u(x)\le0, \forall x\in \Omega \right\rbrace $. Then by the Definition \ref{partial order}, we notice that the partial order induced by $P$ implies that $u \prec v \Longleftrightarrow u(x) \le v(x), \forall x\in \Omega  $.
	
	For $i=1,\ldots,n$, we define $\overline{T}_i:X\to X$, $\overline{T}_i(u_{i+1})=u_i$, where $u_i$ is the unique admissible weak solution of the problem
\begin{equation}\label{EQUATION5.2}
\left\{
	\begin{aligned}
	&S_{k_{i}}\left( D^{2}u_{i}\right) =|u_{i+1}|^{\gamma_{i}},  &{\rm in}\ \Omega,\\
	&u_i=0,  &{\rm on}\ \partial \Omega.
	\end{aligned}
\right.
\end{equation}
Notice that we denote $u_1:=u_{n+1}$ here.
It follows from Lemma \ref{Lemma5.2} that the admissible weak solution $\overline{T}_i(u_{i+1})\in C^0(\overline{\Omega})(i=1,\ldots,n)$. Define a composite operator $\overline{T}:=\overline{T}_1\overline{T}_2\cdots\overline{T}_n$, which is a completely continuous operator. Next, we verify that  $\overline{T}$ satisfies the assumptions of Lemma \ref{Lemma5.1}.

Due to the $k$-convexity property of the admissible weak solution and the boundary data, we have  $\overline{T}(X) \subseteq P$, which implies that $\overline{T}$ is positive and the operator $\overline{T}$ maps $P$ into itself. 
For $t>0$, we have
\begin{equation*}
	\overline{T}_1(tu_2)=t^{\frac{\gamma_1}{k_1}}\overline{T}_1(u_2),\quad \overline{T}_2(tu_3)=t^{\frac{\gamma_2}{k_2}}\overline{T}_2(u_3),\quad \cdots,\quad \overline{T}_n(tu_1)=t^{\frac{\gamma_n}{k_n}}\overline{T}_n(u_1).
\end{equation*} 
Since the assumption $\prod_{i=1}^{n}\gamma_i=\prod_{i=1}^{n}k_i$, we deduce that 
\begin{equation*}
 \overline{T}(tu_1)=t^{\frac{\prod_{i=1}^{n}\gamma_i}{\prod_{i=1}^{n}k_i}}\overline{T}(u_1)=t\overline{T}(u_1),	
\end{equation*}
which implies that $\overline{T}$ is homogeneous. By comparison principle in Lemma 2.1 in \cite{T} and the definition of $\overline{T}_i$, we get that $\overline{T}_i\ (i=1,\ldots,n)$ are all monotone, so is $\overline{T}$.
Finally, we just have to verify that  $\overline{T}$ is strong, that is, for all $u,v \in Im(\overline{T}) \cap P \setminus \{ \theta\} $, there exist $\delta>0$ and $\gamma>0$ such that $u-\delta v \le 0$ in $\Omega$ and $v-\gamma u \le 0$ in $\Omega$. 
If $u\in Im(\overline{T}) \cap P \setminus \{ \theta\} $, then there exists a function $v\in X\setminus \{ \theta\}$ such that $u=\overline{T}v=\overline{T}_1\overline{T}_2\cdots\overline{T}_n(v)$, where $u$ is nonzero $k$-admissible and strictly negative in $\Omega$ satisfying
\begin{equation*}
	\left\{
	\begin{aligned}
		&S_{k_{1}}\left( D^{2}u \right) =|v|^{\gamma{_1}},  &{\rm in}\ \Omega,\\
		&u=0,  &{\rm on}\ \partial \Omega.
	\end{aligned}
	\right.
\end{equation*}
It follows from Lemma \ref{Lemma5.2} that $u=\overline{T}v\in C^{0}(\overline{\Omega})$. 
Notice that $v$ is also the solution of \eqref{EQUATION5.2} satisfying $v \in C^{0}(\overline{\Omega})$, we let $v$ attains its minimum at $x_0 \in \overline{\Omega}$ and $G:= \max|v|^{\gamma_1}=(-v(x_0))^{\gamma_1}$, then we have $0< |v|^{\gamma_1}\le G$ in $\Omega$. 
Consider a function $\omega$ which satisfies
\begin{equation*}
	\left\{
	\begin{aligned}
		&S_{k_{1}}\left( D^{2}\omega \right) =G,  &{\rm in}\ \Omega,\\
		&\omega=0,  &{\rm on}\ \partial \Omega.
	\end{aligned}
	\right.
\end{equation*}
Then, it follows from Lemma \ref{Lemma5.4} that  $\omega \in C^{2,\alpha}(\overline{\Omega})$. 
By comparison principle in Lemma 2.1 in \cite{T}, we have $\omega\le u \le0$, in $\overline{\Omega}$ and $\omega= u =0$, on $\partial\Omega$.
Thus, for some small $t>0$, we have 
\begin{equation*}
	0\le \frac{u(x-t\nu)-u(x)}{-t}
	\le \frac{\omega(x-t\nu)-\omega(x)}{-t},\quad {\rm for}\ x\in\partial\Omega,
\end{equation*}
where $\nu$ is the unit outer normal vector field on $\partial\Omega$.
Take a limit in the last inequality, we have 
\begin{equation*}
	\begin{aligned}
		0\le \limsup_{t \to 0^{+}}\frac{u(x-t\nu)-u(x)}{-t}
		\le& \limsup_{t \to 0^{+}}\frac{\omega(x-t\nu)-\omega(x)}{-t}\\
		=&\frac{\partial\omega(x)}{\partial\nu},\quad {\rm for}\ x\in\partial\Omega.
	\end{aligned}
	 \end{equation*}
With the same argument, there also exists $\tilde{\omega}\in C^{2,\alpha}(\overline{\Omega})$ such that 
\begin{equation}\label{Equation5.2}
	\begin{aligned}
		0\le \limsup_{t \to 0^{+}}\frac{v(x-t\nu)-v(x)}{-t}
		\le& \limsup_{t \to 0^{+}}\frac{\tilde{\omega}(x-t\nu)-\tilde{\omega}(x)}{-t}\\
		=&\frac{\partial\tilde{\omega}(x)}{\partial\nu},\quad {\rm for}\ x\in\partial\Omega,
	\end{aligned}
\end{equation}
then by Hopf Lemma in \cite{Hopf}, we have
\begin{equation}\label{Equation5.3}
	\liminf_{t\to 0^{+}} \frac{u(x-t\nu)-u(x)}{-t}>0,  \quad {\rm on} \ \partial\Omega.
\end{equation} 
By choosing a sufficiently small constant $\delta_1>0$, we have
\begin{equation*}
	\begin{aligned}
		&\liminf_{t\to 0^{+}} \frac{(u-\delta_1v)(x-t\nu)-(u-\delta_1v)(x)}{-t}\\
		=&\liminf_{t\to 0^{+}} \frac{u(x-t\nu)-u(x)}{-t}- \delta_1 \limsup_{t \to 0^{+}} \frac{v(x-t\nu)-v(x)}{-t}\\
		>&0,\quad  {\rm on} \ \partial \Omega,
	\end{aligned}
    \end{equation*}
where \eqref{Equation5.2} and \eqref{Equation5.3} are used in the last inequality. Since $u,v$ are the solutions of \eqref{EQUATION5.2} which satisfy $u=v=0$ on $\partial \Omega$.
Then for $x\in \partial\Omega$, there exists a constant $t_0>0$ such that 
\begin{equation}\label{Equation5.11}
	(u-\delta_1v)(x-t\nu)<0, \quad {\rm for}\ t<t_0.
\end{equation}
Now, \eqref{Equation5.11} implies that 
\begin{equation*}
	u-\delta_1v<0, \quad {\rm in}\ \Omega_{t_0}:=\{x\in \Omega | {\rm dist}(x,\partial \Omega)< t_0\},
\end{equation*}
where $dist(x,\partial\Omega)$ denotes the distance from $x$ to $\partial\Omega$.
For $x\in\Omega \setminus \Omega_{t_0}$, we set 
\begin{equation*}
	\delta_2:=\inf_{x\in \Omega \setminus \Omega_{t_0}}\frac{u(x)}{v(x)}>0.
\end{equation*}
Fixing a constant $\delta\le \min\{\delta_1,\delta_2\}$, then we have 
\begin{equation*}
	u-\delta v\le u-\delta_1 v<0, \quad {\rm in} \ \Omega_{t_0} \quad {\rm and} \quad u-\delta v\le u-\delta_2 v\le 0, \quad {\rm in} \ \Omega\setminus \Omega_{t_0},
\end{equation*}
which implies 
\begin{equation*}
	u-\delta v \le 0, \quad {\rm in} \ \Omega.
\end{equation*}
The same argument shows that there exists a constant $\gamma$ such that $v-\gamma u \le 0$ in $\Omega$. Now we have shown that $\overline{T}$ is strong.
Moreover, $\mathcal{N}(\overline T)  := \{u\in P | \overline T (u)=0\}=\{0\}$.

Combining this with Lemma \ref{Lemma5.1} (i), we obtain that there exists 
$u_1^*\in P\setminus \{ \theta\}$ and constant $\lambda_0>0$ such that $u_1^*=\lambda_0\overline{T}(u_1^*)=\lambda_0\overline{T}_1\overline{T}_2\cdots\overline{T}_n(u_1^*)$. Let $u_n^*=\overline{T}_n(u_1^*),\ldots,u_2^*=\overline{T}_2(u_3^*)$. Then $(u_1^*,u_2^*,\ldots,u_n^*)$ is a solution of the following system
\begin{equation*}
	\left\{
	\begin{aligned}
		&S_{k_{1}}\left( D^{2}(\frac{u_{1}}{\lambda_0})\right) =\left(-u_{2}\right)^{\gamma_1} ,& {\rm in}\ \Omega,\\
		&S_{k_{2}}\left( D^{2}u_{2}\right) =\left(-u_{3}\right)^{\gamma_2} ,& {\rm in}\ \Omega,\\
		&\qquad \qquad \quad \vdots\\
		&S_{k_{n-1}}\left( D^{2}u_{n-1}\right) =\left(-u_{n}\right)^{\gamma_{n-1}} ,& {\rm in}\ \Omega,\\
		&S_{k_{n}}\left( D^{2}u_{n}\right) =\left(-u_{1}\right)^{\gamma_n} ,& {\rm in}\ \Omega,\\
		&u_{i}=0,i=1,\ldots,n,& {\rm on}\ \partial \Omega.
	\end{aligned}
	\right.
\end{equation*}
By Lemma \ref{Lemma5.1} (ii), if there exist $u_0 \in P \setminus \{ \theta\}$ and $\lambda_1 > 0$ such that $u_0 = \lambda_1 \overline{T}(u_0)$, then $\lambda_1 = \lambda_0$.

For this reason, the eigenvalue problem
\begin{equation}\label{Equation5.9}
	\left\{
	\begin{aligned}
		&S_{k_{1}}\left( D^{2}u_{1}\right) =\tilde{\lambda}\left(-u_{2}\right)^{\gamma_1} ,& {\rm in}\ \Omega,\\
		&S_{k_{2}}\left( D^{2}u_{2}\right) =\left(-u_{3}\right)^{\gamma_2} ,& {\rm in}\ \Omega,\\
		&\qquad \qquad \quad \vdots\\
		&S_{k_{n-1}}\left( D^{2}u_{n-1}\right) =\left(-u_{n}\right)^{\gamma_{n-1}} ,& {\rm in}\ \Omega,\\
		&S_{k_{n}}\left( D^{2}u_{n}\right) =\left(-u_{1}\right)^{\gamma_n} ,& {\rm in}\ \Omega,\\
		&u_{i}=0,i=1,\ldots,n,& {\rm on}\ \partial \Omega,
	\end{aligned}
	\right.
\end{equation}
admits a solution ($\boldsymbol{k}$-admissible solution) if and only if $\tilde{\lambda}=\lambda_0^{k_1}$.

Next, we prove that the system \eqref{EQUATION3} has a nonzero $\boldsymbol{k}$-admissible solution if and only if
\begin{equation*}
\lambda_1\lambda_2^{\frac{\gamma_1}{k_2}}\cdots\lambda_n^{\frac{\prod_{i=1}^{n-1}\gamma_i}{\prod_{i=2}^{n}k_i}}=\lambda_0^{k_1}.
\end{equation*}
In fact, if $(u_1,\ldots,u_n)$ is a  $\boldsymbol{k}$-admissible solution of the system \eqref{EQUATION3}, then 
\begin{equation*}
	S_{k_n}(D^2u_n)=\lambda_n(-u_1)^{\gamma_n},
\end{equation*}
which implies that
\begin{equation*}
	S_{k_n}\left( D^2(\lambda_n^{-\frac{1}{k_n}}u_n)\right) =(-u_1)^{\gamma_n},
\end{equation*}
Let $\tilde{u}_n=\lambda_n^{-\frac{1}{k_n}}u_n$, we have $	S_{k_n}( D^2\tilde{u}_n) =(-u_1)^{\gamma_n}$ and
\begin{equation*}
	S_{k_{n-1}}(D^2u_{n-1})=\lambda_{n-1}(-u_n)^{\gamma_{n-1}}=\lambda_{n-1}\lambda_n^{\frac{\gamma_{n-1}}{k_n}}(-\tilde{u}_n)^{\gamma_{n-1}}.
\end{equation*}
By the same argument, we let 
\begin{equation*}
	\begin{aligned}
		&\tilde{u}_{n-1}=\lambda_{n-1}^{-\frac{1}{k_{n-1}}}\lambda_n^{-\frac{\gamma_{n-1}}{k_{n-1}k_n}}u_{n-1},\\
		&\qquad\vdots\\
		&\tilde{u}_2=\lambda_{2}^{-\frac{1}{k_{2}}}\lambda_3^{-\frac{\gamma_2}{k_2k_{3}}}\cdots\lambda_n^{-\frac{\prod_{i=2}^{n-1}\gamma_i}{\prod_{i=2}^{n}k_i}}u_2,
	\end{aligned}
\end{equation*}  
therefore we have 
$S_{k_{n-1}}( D^2\tilde{u}_{n-1}) =(-\tilde{u}_{n})^{\gamma_{n-1}}$
,$\cdots$, $S_{k_1}(D^2u_1)=\lambda_1\lambda_{2}^{\frac{\gamma_1}{k_{2}}}
\cdots\lambda_n^{\frac{\prod_{i=1}^{n-1}\gamma_i}{\prod_{i=2}^{n}k_i}}(-\tilde{u}_{2})^{\gamma_1}$.
From the previous discussion, we know that \eqref{Equation5.9} admits a $\boldsymbol{k}$-admissible solution if and only if $\tilde{\lambda}=\lambda_0^{k_1}$. So,  $\lambda_1\lambda_2^{\frac{\gamma_1}{k_2}}\cdots\lambda_n^{\frac{\prod_{i=1}^{n-1}\gamma_i}{\prod_{i=2}^{n}k_i}}=\lambda_0^{k_1}$.

On the other hand, if  $\lambda_1\lambda_2^{\frac{\gamma_1}{k_2}}\cdots\lambda_n^{\frac{\prod_{i=1}^{n-1}\gamma_i}{\prod_{i=2}^{n}k_i}}=\lambda_0^{k_1}$, we let $\tilde{\lambda}=\lambda_1\lambda_2^{\frac{\gamma_1}{k_2}}\cdots\lambda_n^{\frac{\prod_{i=1}^{n-1}\gamma_i}{\prod_{i=2}^{n}k_i}}$. Then $\tilde{\lambda}=\lambda_0^{k_1}$, which implies that \eqref{Equation5.9} has a $\boldsymbol{k}$-admissible solution $(u_1,\ldots,u_n)$. Define
\begin{equation*}
	\begin{aligned}
	&u_2^*=\lambda_{2}^{\frac{1}{k_{2}}}\lambda_3^{\frac{\gamma_2}{k_2k_{3}}}\cdots\lambda_n^{\frac{\prod_{i=2}^{n-1}\gamma_i}{\prod_{i=2}^{n}k_i}}u_2,\\
	&\qquad\vdots\\
	&u_{n-1}^*=\lambda_{n-1}^{\frac{1}{k_{n-1}}}\lambda_n^{\frac{\gamma_{n-1}}{k_{n-1}k_n}}u_{n-1},\\
	&u_n^*=\lambda_n^{\frac{1}{k_n}}u_n.
	\end{aligned}
\end{equation*} 
Then $(u_1,u_2^*,\ldots,u_n^*)$ is a $\boldsymbol{k}$-admissible solution of system \eqref{EQUATION3}.
\end{proof}
\begin{remark}
	It is necessary to emphasis that if we define different composite operator, for example $\overline{T}:=\overline{T}_2\cdots\overline{T}_n\overline{T}_1$, etc, then we can let $\tilde{\lambda}$ be related to each $k_i\ (i=1,\ldots,n)$. Here, we only take $\tilde{\lambda}=\lambda_0^{k_1}$ for a detailed explanation.
\end{remark}
\begin{remark}
Here, we point out that our proof of the strong property of $\overline{T} $ is different from that in \cite{ZQ}. We overcome the difficult caused by the non-differentiability of solutions to degenerate $k$-Hessian equations and find the corresponding sub-solutions equipped with the higher regularity. Thanks to the Hopf lemma in \cite{Hopf} which applied to the non-differentiable function, we derive the proof.
\end{remark}

\vspace{3mm}

\bibliographystyle{amsplain}

\end{document}